\theoremstyle{plain}
\newtheorem{thm}{Theorem}[section]
\newtheorem{cor}[thm]{Corollary}
\newtheorem{lem}[thm]{Lemma}
\newtheorem{lemma}[thm]{Lemma}
\newtheorem{prop}[thm]{Proposition}
\theoremstyle{definition}
\theoremstyle{remark}
\newtheorem{rmk}[thm]{Remark}
\newcommand{\BC}{{\mathbb{C}}}
\newcommand{\BG}{{\mathbb{G}}}
\newcommand{\BH}{{\mathbb{H}}}
\newcommand{\BP}{{\mathbb{P}}}
\newcommand{\BQ}{{\mathbb{Q}}}
\newcommand{\BZ}{{\mathbb{Z}}}
\newcommand{\CC}{{\mathcal C}}
\newcommand{\CH}{{\mathcal H}}
\newcommand{\CI}{{\mathcal I}}
\newcommand{\CJ}{{\mathcal J}}
\newcommand{\CN}{{\mathcal N}}
\newcommand{\CO}{{\mathcal O}}
\newcommand{\CQ}{{\mathcal Q}}
\newcommand{\CU}{{\mathcal U}}
\newcommand{\CV}{{\mathcal V}}
\newcommand{\CX}{{\mathcal X}}
\DeclareMathOperator{\Hilb}{Hilb}
\DeclareFontFamily{OT1}{rsfs}{}
\DeclareFontShape{OT1}{rsfs}{n}{it}{<-> rsfs10}{}
\DeclareMathAlphabet{\curly}{OT1}{rsfs}{n}{it}
\newcommand{\p}{\mathbb{P}}
\newcommand{\Mbar}{{\overline M}}
\newcommand{\GW}{\mathsf{GW}}
\newcommand{\Sym}{{\mathrm{Sym}}}
\newcommand{\Z}{\mathsf{Z}}
\newcommand{\ev}{{\mathrm{ev}}}
\begin{document}
\title[Rational curves in holomorphic symplectic varieties]
{Rational curves in holomorphic symplectic varieties and Gromov--Witten invariants}
\date{\today}

\author{Georg Oberdieck}
\address{Universit\"at Bonn, Mathematisches Institut}
\email{georgo@math.uni-bonn.de}

\author{Junliang Shen}
\address{Massachusetts Institute of Technology, Department of Mathematics}
\email{jlshen@mit.edu}

\author{Qizheng Yin}
\address{Peking University, Beijing International Center for Mathematical Research}
\email{qizheng@math.pku.edu.cn}

\begin{abstract}
We use Gromov--Witten theory to study rational curves in holomorphic symplectic varieties. 
We present a numerical criterion for the existence of uniruled divisors swept out by rational curves in the primitive curve class of a very general holomorphic symplectic variety of $K3^{[n]}$ type. 
We also classify all rational curves in the primitive curve class of the Fano variety of lines in a very general cubic~$4$-fold. 
Our proofs rely on Gromov--Witten calculations by the first author,
and in the Fano case on a geometric construction of Voisin.
In the Fano case a second proof via classical geometry is sketched.

\end{abstract}
\baselineskip=14.5pt
\maketitle

\setcounter{tocdepth}{1} 

\tableofcontents
\setcounter{section}{-1}

\section{Introduction}

\subsection{Overview}
Rational curves in $K3$ surfaces have been investigated
for decades from various angles.
In contrast,
not much is known about the geometry of rational curves in the higher-dimensional analogs of $K3$ surfaces---holomorphic symplectic varieties.\footnote{A nonsingular projective variety $X$ is holomorphic symplectic 
if it is simply connected and $H^0(X, \Omega^2_X)$ is generated by a nowhere degenerate holomorphic $2$-form.}
In this paper, we use \emph{Gromov--Witten theory}
(intersection theory of the moduli space of stable maps)
together with classical methods to study these rational curves.

\subsection{Rational curves}
Let $(X, H)$ be a very general polarized holomorphic symplectic variety of dimension $2n$, and let $\beta \in H_2(X, \BZ)$ be the primitive curve class.
The moduli space $\Mbar_{0,m}(X,\beta)$ of genus~$0$ and $m$-pointed stable maps to $X$ in class $\beta$
is pure of expected dimension $2n-2+m$; see Proposition~\ref{prop2.1}. 
Consider the decomposition
\begin{equation}\label{decomp1}
\Mbar_{0,1}(X,\beta) = M^0 \cup M^1 \cup \cdots \cup M^{n - 1}
\end{equation}
such that the general fibers of the restricted evaluation map
\[\mathrm{ev}: M^i \rightarrow \mathrm{ev}(M^i) \subset X\]
are of dimension $i$.
The image of $M^0$ under $\mathrm{ev}$ is precisely the
union of all uniruled divisors swept out by rational curves in class $\beta$.
More generally, the image $\mathrm{ev}(M^i)$ is the codimension $i+1$ locus of points on $X$ through which
passes
an $i$-dimensional family of rational curves in class $\beta$. 

In \cite[Conjecture 4.3]{MoPa2}, Mongardi and Pacienza conjectured that for all $i$
\begin{equation*}\label{MPC}
M^i \neq \emptyset,
\end{equation*}
which would imply the existence of algebraically coisotropic subvarieties in~$X$ in the sense of Voisin \cite{V2}.

In Theorems~\ref{A3} and \ref{mainthm} below, we provide counterexamples to this conjecture
which illustrate ``pathologies" of rational curves in higher-dimensional holomorphic symplectic varieties.
Two typical examples are as follows.
\begin{enumerate}
\item[(i)] There exist a very general pair $(X, H)$ of~$K3^{[8]}$ type with $M^0 = \emptyset$.
In other words, on $(X,H)$ there exists no uniruled divisor swept out by rational curves in the primitive class $\beta$.
\item[(ii)] For the Fano variety of lines in a very general cubic $4$-fold, we have~$M^1 = \emptyset$.
\end{enumerate}
Here a variety is of $K3^{[n]}$ type if it is deformation equivalent to the Hilbert scheme of $n$ points on a $K3$ surface.



\subsection{Uniruled divisors}
On a holomorphic symplectic variety $X$, let
\begin{equation*} 
( - , - ) : H_2(X,\BZ) \times H_2(X,\BZ) \to \BQ
\end{equation*}
denote the unique $\BQ$-valued extension of the Beauville--Bogomolov form on~$H^2(X,\BZ)$.
If $X$ is of $K3^{[n]}$ type and $n \geq 2$, there is an isomorphism of abelian~groups
\[ r : H_2(X,\BZ)/H^2(X,\BZ) \to \BZ / (2n-2) \BZ, \]
unique up to multiplication by $\pm 1$,
such that $r(\alpha) = 1$ for some $\alpha \in H_2(X, \BZ)$ with $(\alpha, \alpha) = \frac{1}{2-2n}$.
Given a class $\beta \in H_2(X,\BZ)$, we define its residue set by
\begin{equation*} 
\pm [\beta] = \{ \pm r(\beta) \} \subset \BZ / (2n-2) \BZ. \end{equation*}
In case $n=1$, we set $\pm [\beta]=0$.

The following theorem provides a complete numerical criterion for the existence of uniruled divisors swept out by rational curves in the primitive class of a very general variety of $K3^{[n]}$ type.

\begin{thm}\label{A3}
Let $X$ be a holomorphic symplectic variety of $K3^{[n]}$ type, and let $\beta \in H_2(X,\BZ)$ be a primitive curve class. If
\begin{gather*}
(\beta, \beta) = -2 + \sum_{i = 1}^{n - 1} 2 d_i - \frac{1}{2n - 2}\left(\sum_{i = 1}^{n - 1} r_i\right)^2, \\
\pm[\beta] = \pm\left[\sum_{i = 1}^{n - 1} r_i\right]
\end{gather*}
for some $d_i, r_i \in \BZ$ satisfying $2d_i - \frac{r_i^2}{2} \geq 0$, 
then there exists a uniruled divisor on $X$ swept out by rational curves in class $\beta$.
The converse holds if~$\beta$ is irreducible.
\end{thm}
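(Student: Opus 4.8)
The plan is to translate the geometric statement into the non-vanishing of a single divisor class built from genus-$0$ Gromov--Witten theory, and then to read off the numerical criterion from the first author's evaluation of that class. By Proposition~\ref{prop2.1} the space $\Mbar_{0,1}(X,\beta)$ is pure of dimension $2n-1$, so its (reduced) fundamental class is defined and one may form
\[
D_\beta := \mathrm{ev}_*\big[\Mbar_{0,1}(X,\beta)\big] \in H^2(X,\BQ),
\]
using $H_{4n-2}(X,\BQ) \cong H^2(X,\BQ)$. Since a pushforward of a fundamental class is effective, and since the components of $M^0$ are precisely those on which $\mathrm{ev}$ is generically finite onto a divisor (while the higher $M^i$ contribute zero because $\mathrm{ev}$ has positive-dimensional fibers there), one has $D_\beta \neq 0 \iff M^0 \neq \emptyset$, i.e.\ $D_\beta \neq 0$ exactly when a uniruled divisor swept out by rational curves in class $\beta$ exists. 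Thus both directions reduce to computing $D_\beta$ and deciding when it vanishes.

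The second step is deformation invariance. The class $D_\beta$, being a Gromov--Witten quantity, is an invariant of $(X,\beta)$ up to deformation and monodromy; by Markman's description of the monodromy of the Hilbert-scheme lattice (an Eichler-type transitivity), the monodromy orbit of a primitive $\beta$ is determined exactly by the two invariants $(\beta,\beta)$ and $\pm[\beta]$. I would therefore fix a convenient model $X = S^{[n]}$ and an explicit $\beta = C + kA$, where $C \in H^2(S,\BZ) \hookrightarrow H_2(S^{[n]},\BZ)$ and $A$ is the class of the exceptional $\BP^1$ over the diagonal, with $(A,A) = -\tfrac{1}{2n-2}$ and $r(A)=1$. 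Then $(\beta,\beta) = C^2 - \tfrac{k^2}{2n-2}$ and $\pm[\beta]=\pm[k]$, so matching with the statement forces $k = \sum_i r_i$ and $C^2 = -2 + 2\sum_i d_i$, and exhibits the $n-1$ pairs $(d_i,r_i)$ as the data of $n-1$ ``building blocks''.

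The heart of the argument is the evaluation of $D_\beta$ through the first author's genus-$0$ Gromov--Witten calculations for $K3^{[n]}$. I expect $D_\beta$ to be expressible as a sum, over all ways of writing $\beta$ as such a configuration of $n-1$ blocks, of products of Yau--Zaslow-type counts, each factor being nonzero exactly when its block is an effective curve class on the $K3$; the constraint $2d_i - \tfrac{r_i^2}{2} \geq 0$ is precisely the effectivity (arithmetic genus $\geq 0$) of the $i$-th block, so that it contributes positively. Because all contributions are effective, no cancellation occurs, and $D_\beta \neq 0$ holds if and only if at least one admissible decomposition exists, i.e.\ if and only if the two displayed conditions hold. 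This gives the existence direction: the numerical conditions force $D_\beta \neq 0$, hence a uniruled divisor.

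For the converse I would argue that, when $\beta$ is irreducible, the reduced virtual class of $\Mbar_{0,1}(X,\beta)$ coincides with the honest fundamental class, so that $M^0 \neq \emptyset$ is genuinely equivalent to $D_\beta \neq 0$, which by the formula forces the numerical conditions. The main obstacle is exactly this comparison between the virtual and the actual geometry: for irreducible $\beta$ one must rule out obstructions and multiple covers so that $D_\beta$ faithfully records $M^0$, whereas for reducible $\beta$ a uniruled divisor can be swept out by chains of rational curves in smaller classes, producing divisors that the block formula need not predict, which is why the converse genuinely requires $\beta$ irreducible. Extracting the clean integrality statement $2d_i - \tfrac{r_i^2}{2}\geq 0$ from the (quasi)modular packaging of the Gromov--Witten series, and verifying term-by-term positivity, is the most delicate part of the computation.
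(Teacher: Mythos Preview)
Your overall strategy is the paper's: reduce both directions to the (non)vanishing of the pushforward of a genus-$0$ Gromov--Witten class in $H^2(X,\BQ)$, evaluate that class via \cite{Ob2} as the coefficient $\big(\phi^{n-1}/\Delta\big)_\beta$ of a quasi-Jacobi form with $\phi = (-\wp + \tfrac{1}{12}E_2)\Theta^2$, and read off the numerical criterion from the nonnegativity of the Fourier coefficients of $\phi$ (Lemma~\ref{Positivity_1}). Your ``$n-1$ building blocks'' are exactly the $n-1$ factors in the product $\phi^{n-1}$, and the inequality $2d_i - r_i^2/2 \geq 0$ is precisely the positivity criterion for a single factor; the $-2$ shift in the norm comes from $1/\Delta$, whose coefficients are all positive, so no cancellation occurs.

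There is, however, a gap in how you set things up. You invoke Proposition~\ref{prop2.1} at the outset to form $D_\beta = \ev_*[\Mbar_{0,1}(X,\beta)]$ from the honest fundamental class, but that proposition requires $\beta$ to be \emph{irreducible}, whereas the forward direction of the theorem assumes only that $\beta$ is primitive. For a primitive but reducible $\beta$ the moduli space need not be pure of expected dimension, so the ordinary fundamental class in degree $2n-1$ is not available; and in any case it is only the \emph{virtual} pushforward $\ev_*[\Mbar_{0,1}(X,\beta)]^{\mathrm{vir}}$ that is deformation invariant, which you need for the reduction to a Hilbert-scheme model. The fix, and this is what the paper does, is to work with the virtual class from the start: its pushforward is always a class in $H^2(X,\BQ)$ supported on the image of $\ev$, so nonvanishing already forces that image to contain a uniruled divisor, with no irreducibility hypothesis needed. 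Irreducibility of $\beta$ then enters only for the converse, exactly where you place it, to identify the virtual and actual classes via Proposition~\ref{prop2.1} and property~(b).
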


For a very general pair $(X, \beta)$ with $X$ of $K3^{[n]}$ type and $\beta$ the primitive curve class, Theorem \ref{A3} implies that
\begin{enumerate}
\item[(i)] $M^0 \neq \emptyset$ when $n \leq 7$, and
\item[(ii)] for every $n\geq 8$, there exists $(X, \beta)$ such that $M^0= \emptyset$.
\end{enumerate}
The first instance of case (ii) is given by a very general pair $(X,\beta)$ of $K3^{[8]}$ type with
$(\beta,\beta) = \frac{3}{14}$ and $\pm [\beta] = \pm [5]$.\footnote{Such a pair $(X,\beta)$ can be obtained by deforming $(\Hilb^8(\mathsf{S}), \beta')$, where $\mathsf{S}$ is a $K3$ surface of genus $2$ with polarization $\mathsf{H}$ and $\beta' = \mathsf{H} + 5 \mathsf{A}$
with $\mathsf{A}$ the exceptional curve class; see Section~\ref{GW_Appendix} for the notation.}

\subsection{Fano varieties of lines}
Let $Y \subset \BP^5$ be a nonsingular cubic $4$-fold. 
By Beauville and Donagi \cite{BD}, the Fano variety of lines in~$Y$
\[ F = \{ l \in \mathrm{Gr}(2,6) : l \subset Y \} \]
is a holomorphic symplectic $4$-fold.
These varieties form a $20$-dimensional family of
polarized holomorphic symplectic varieties 
of $K3^{[2]}$ type.

In \cite{V0}, Voisin constructed a rational self-map
\begin{equation}\label{rationalmap}
\varphi: F \dashrightarrow F
\end{equation}
sending a general line $l$ to its residual line with respect to the unique plane~$\BP^2 \subset \BP^5$ tangent to $Y$ along $l$.
When $Y$ is very general,
the exceptional divisor associated to the resolution of $\varphi$
\begin{equation}\label{uniruled}
\begin{tikzcd}
D  = \BP(\CN_{S/F})\arrow{r}{\phi} \arrow{d}{p} & F \\
S
\end{tikzcd} 
\end{equation}
is a $\p^1$-bundle over a nonsingular surface $S \subset F$; see Amerik \cite{A}.
The image of each fiber
\begin{equation*}
\phi(p^{-1}(s)) \subset F, \quad s \in S \label{dgfsdgsdf}
\end{equation*}
is a rational curve lying in the primitive curve class in $H_2(F, \BZ)$.

The following theorem shows that every rational curve in the primitive curve class is of this form in a unique way.



\begin{thm}\label{mainthm}
Let $F$ be the Fano variety of lines in a very general cubic~$4$-fold.
Then for every rational curve $C \subset F$ in the primitive curve class, there is a unique $s \in S$ such that $C = \phi(p^{-1}(s))$.
\end{thm}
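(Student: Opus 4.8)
The plan is to realize Voisin's construction as a morphism to the moduli of stable maps and to prove, via a reduced Gromov--Witten calculation, that this morphism is an isomorphism; both the classification and the vanishing $M^1 = \emptyset$ will then follow. By Proposition~\ref{prop2.1} the space $\Mbar_{0,0}(F,\beta)$ is pure of dimension $2n-2 = 2$, matching $\dim S = 2$.

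First I would promote the diagram~\eqref{uniruled} to a classifying morphism. The $\BP^1$-bundle $p \colon D = \BP(\CN_{S/F}) \to S$, together with $\phi$ and a marking, is a family of genus-$0$ stable maps to $F$ over $S$ and hence defines
\[
\iota \colon S \longrightarrow \Mbar_{0,0}(F,\beta), \qquad s \longmapsto \bigl[\phi|_{p^{-1}(s)}\bigr].
\]
The uniqueness asserted in the theorem is precisely the injectivity of $\iota$, that distinct fibers of $p$ have distinct images $C_s = \phi(p^{-1}(s))$. I would extract this from Amerik's analysis \cite{A} of the resolution of $\varphi$, where the curves $C_s$ sweep out a divisor in $F$ and no two of them coincide; hence $\iota$ is injective and its image is a $2$-dimensional component of $\Mbar_{0,0}(F,\beta)$.

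It remains to prove that $\iota$ is \emph{surjective}, which is the core of the argument. The idea is to determine the reduced virtual class of $\Mbar_{0,0}(F,\beta)$ from the first author's Gromov--Witten calculation and to match it against the contribution of the component $\iota(S)$, which is computable directly from the $\BP^1$-bundle $D$. Granting that for very general $Y$ every irreducible component of the moduli space contributes nonnegatively, this equality forces $\iota(S)$ to exhaust $\Mbar_{0,0}(F,\beta)$. Surjectivity then yields the classification, and $M^1 = \emptyset$ is immediate, since the family $\{C_s\}_{s \in S}$ sweeps a divisor in $F$ with generically finite evaluation map.

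The principal obstacle I anticipate is exactly this matching step: a priori $\Mbar_{0,0}(F,\beta)$ could carry further $2$-dimensional components parametrizing rational curves invisible to Voisin's construction, and excluding them requires not merely the dimension count but the precise reduced class together with nonnegativity of its local contributions. Establishing that nonnegativity---so that no stray component can cancel against the contribution of $\iota(S)$---is the crux of the Gromov--Witten route. As an independent check, the sketched classical argument studies the one-parameter family of lines on $Y$ parametrized by a curve $C$ in class $\beta$; the geometry of lines on a cubic fourfold then identifies the surface they sweep with the plane tangent to $Y$ underlying $\varphi$, forcing $C$ to be a Voisin fiber.
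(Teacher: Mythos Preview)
Your overall architecture---promote Voisin's construction to $\iota \colon S \to \Mbar_{0,0}(F,\beta)$, establish injectivity, then use a Gromov--Witten count plus positivity of residual contributions to force surjectivity---matches the paper's, but two steps need sharpening.

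First, injectivity of $\iota$ does not come for free from \cite{A}. The paper proves it separately (Proposition~\ref{compo}) by a direct geometric argument: if two distinct fibers $p^{-1}(s_1), p^{-1}(s_2)$ had the same image in $F$, the corresponding lines $l_1, l_2 \subset Y$ would have coincident Gauss images in $(\BP^5)^{\ast}$, and a normal-form dimension count on the cubic polynomial shows this occurs only over a $16$-dimensional locus in the $20$-dimensional moduli.

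Second, and this is the real gap, you have not identified \emph{which} invariant your matching step uses, and the obvious choice fails. The one-pointed pushforward $\ev_*[\Mbar_{0,1}(F,\beta)] \in H^2(F,\BQ)$ receives \emph{zero} contribution from any $M^1$-component (its image has codimension~$2$), so ``nonnegativity'' there is vacuous and cannot exclude such components. The paper therefore proceeds in two stages. The one-pointed class, equal to $60H$ both for $D$ (via \cite{A}) and for $\Mbar_{0,1}$ (via Theorem~\ref{thm_Uniruled}), combined with injectivity, yields $M^0 = D$. To kill $M^1$ one must pass to the \emph{two}-pointed correspondence $\GW_\beta \colon H^4(F,\BQ) \to H^4(F,\BQ)$: on the class $c = c_2(\CU_F^{\ast})$ (proportional to $v_F$), the component $D$ alone contributes eigenvalue $945$ (Proposition~\ref{eigen}), matching the total eigenvalue from Theorem~\ref{ThmGWb_1}. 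Here the positivity has genuine content: any nonempty $M^1$-component maps to a Lagrangian surface, whose class is a \emph{strictly positive} multiple of $c$, and a short diagram chase (Proposition following~\ref{eigen}) shows it contributes strictly positively to the eigenvalue. Since $945 = 945$, we conclude $M^1 = \emptyset$. Without isolating the $H^4$-correspondence and this Lagrangian-class positivity, your surjectivity step does not go through.
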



By a result of Huybrechts \cite[Section 6]{Huy}, the surface $S$ is connected.\footnote{
In an earlier version of this paper the connectedness of $S$ was claimed.
We thank Daniel Huybrechts for pointing out a gap in our earlier argument and presenting a new proof.
The connectedness of $S$ is not needed in the proof of Theorem~\ref{mainthm}.}
In particular, the moduli space of rational curves in the primitive curve class of a very general $F$ is irreducible. This implies $M^1 = \emptyset$ in the decomposition~(\ref{decomp1}) and the following.



\begin{cor}\label{maincor}
For a very general $F$, there is a unique irreducible uniruled divisor swept out by rational curves in the primitive curve class.
\end{cor}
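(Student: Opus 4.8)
The plan is to derive the corollary from Theorem~\ref{mainthm} by identifying the union of \emph{all} rational curves in the primitive class $\beta$ with the single image $\phi(D)$, and then showing that this image is an irreducible divisor. Write $U \subset F$ for the union of all rational curves in the primitive class. By Theorem~\ref{mainthm} every such curve equals $\phi(p^{-1}(s))$ for a unique $s \in S$, and conversely each $\phi(p^{-1}(s))$ is a rational curve in the primitive class; hence
\[ U = \bigcup_{s \in S} \phi(p^{-1}(s)) = \phi(D). \]
Since $\phi$ is a morphism out of the projective variety $D$, the image $U = \phi(D)$ is closed in $F$. For irreducibility I would first note that $S$ is connected by Huybrechts and, being nonsingular, is therefore irreducible; as $D = \BP(\CN_{S/F})$ is a $\BP^1$-bundle over $S$, it is irreducible as well, so $U = \phi(D)$ is the image of an irreducible space and hence irreducible.

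Next I would verify that $U$ is a divisor, i.e.\ $\dim U = 3$. Here $\dim D = \dim S + 1 = 3$, so it suffices to show that $\phi|_D$ is generically finite onto $U$. The restriction of $\phi$ to a fiber $p^{-1}(s) \cong \BP^1$ is a nonconstant morphism onto the rational curve $\phi(p^{-1}(s))$, hence finite on each fiber; it is therefore enough to know that a general point of $U$ lies on only finitely many of the curves $\phi(p^{-1}(s))$. This is exactly the statement that $\ev \colon \Mbar_{0,1}(F,\beta) \to F$ is generically finite, which holds because $M^1 = \emptyset$ forces $\Mbar_{0,1}(F,\beta) = M^0$ in the decomposition~(\ref{decomp1}) (recall $n=2$), and on $M^0$ the general fibers of $\ev$ are $0$-dimensional by construction. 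Consequently $\dim U = \dim M^0 = 2n - 2 + 1 = 3 = \dim F - 1$, so $U$ is a divisor, and it is uniruled since it is swept out by the rational curves $\phi(p^{-1}(s))$. This already produces an irreducible uniruled divisor of the required type, settling existence.

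For uniqueness, suppose $Z \subset F$ is any irreducible uniruled divisor swept out by rational curves in the primitive class. Each curve of the covering family is a rational curve in the primitive class, hence lies in $U$; taking the union over the family gives $Z \subseteq U$. Since $Z$ and $U$ are both closed and irreducible of dimension $3$ with $Z \subseteq U$, we conclude $Z = U$, proving that $U$ is the unique such divisor.

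The \emph{main obstacle} is the generic finiteness of $\phi$ on $D$, equivalently the assertion $\dim U = 3$: a priori the two-dimensional family $\{\phi(p^{-1}(s))\}_{s \in S}$ of rational curves could sweep out a locus of dimension less than $3$ if the curves overlapped heavily, which would prevent $U$ from being a divisor and would invalidate the dimension comparison in the uniqueness step. Ruling this out is precisely where the input $M^1 = \emptyset$ enters, as it excludes a positive-dimensional family of primitive rational curves through a general point of $U$; the remaining ingredients (closedness, irreducibility of $S$ and $D$, and finiteness of $\phi$ on each fiber) are then routine.
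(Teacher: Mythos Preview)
Your argument is correct and follows the same approach as the paper, which does not spell out a proof of the corollary but simply observes that Theorem~\ref{mainthm} together with the connectedness of $S$ (Huybrechts) makes the moduli space of rational curves in class $\beta$ irreducible, from which both $M^1=\emptyset$ and the corollary follow. Your write-up fills in these details accurately.

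One minor remark: the ``main obstacle'' you highlight---that $\phi|_D$ is generically finite, so $\dim U = 3$---does not actually require the input $M^1=\emptyset$. The paper records (equation~\eqref{60H}) that $\phi_*[D]=60H\neq 0$ in $H^2(F,\BQ)$; since a pushforward along a non-generically-finite map would vanish in top degree, this already forces $\phi|_D$ to be generically finite and hence $\dim\phi(D)=3$. Your route via $M^1=\emptyset$ is valid too, but the pushforward computation is the more direct way to settle the dimension.
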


The moduli space of rational curves in the primitive curve class of a very general $K3$ surface always has more than one irreducible component. Corollary \ref{maincor} indicates a difference between rational curves in $K3$ surfaces and in higher-dimensional holomorphic symplectic varieties.

\subsection{Idea of proofs}
We briefly explain how Gromov--Witten theory \cite{FP} controls rational curves in the primitive class $\beta$ of a very general polarized holomorphic symplectic variety $(X, H)$ of $K3^{[n]}$ type.

Since the evaluation map $\mathrm{ev}$ is generically finite on the component $M^0$
but contracts positive dimensional fibers on all other components in the decomposition (\ref{decomp1}),
the (non)emptiness of $M^0$ is detected by the pushforward
\begin{equation} \mathrm{ev}_\ast [\Mbar_{0,1}(X,\beta)] \in H^2(X,\BQ). \label{sfdfsd} \end{equation}
For the Fano variety of lines $X=F$, a key observation is that the emptiness of~$M^1$ can be further detected
by the Gromov--Witten correspondence
\begin{equation} \mathrm{ev_{12}}_\ast [\Mbar_{0,2}(X,\beta)] \in H^{4n}(X\times X, \BQ). \label{3sdf} \end{equation}
The class \eqref{3sdf} has contributions from all of the components in (\ref{decomp1}), and contains strictly more information than the $1$-pointed class \eqref{sfdfsd}.

Since $\Mbar_{0,m}(X,\beta)$ is pure of the expected dimension, its fundamental class coincides with the (reduced) virtual fundamental class \cite{BF, LT},
\[
[\Mbar_{0,m}(X,\beta)] = [\Mbar_{0,m}(X,\beta)]^{\mathrm{vir}}.
\]
Hence the classes \eqref{sfdfsd} and \eqref{3sdf} are determined by the Gromov--Witten invariants of $X$. By deformation invariance, the Gromov--Witten invariants can be calculated on a special model given by the Hilbert scheme of points of an elliptic $K3$ surface; see \cite{Ob2} and Section \ref{GW_Appendix}.

Our proofs of Theorems \ref{A3} and \ref{mainthm} are intersection-theoretic. In Appendix~\ref{Classical}, we also sketch an alternative proof of Theorem \ref{mainthm} using a series of classification results in classical projective geometry.\footnote{The proof in Appendix \ref{Classical} was found only after
a first version of this article appeared online.
While Theorem~\ref{mainthm} can be proven classically, the quantitative information obtained from Gromov--Witten theory was essential for us to find the statement.}

\subsection{Conventions}
We work over the complex numbers. A statement holds for a \emph{very general} polarized projective variety $(X,H)$ if it holds away from a countable union of proper Zariski-closed subsets in the corresponding component of the moduli space.

\subsection{Acknowledgements}
The project was started when all three authors were participating in the
Spring 2018 workshop \emph{Enumerative Geometry Beyond Numbers}
at the \emph{Mathematical Sciences Research Institute} (MSRI) in Berkeley.
We thank the institute and the organizers for providing excellent working conditions.
We would also like to thank Olivier Debarre, Jun~Li, Eyal Markman, Rahul Pandharipande, Mingmin Shen, Zhiyu Tian, and Chenyang Xu for useful discussions,
and the referee for insightful comments.

G.~O.~was supported by the National Science Foundation Grant DMS-1440140 while in residence at MSRI, Berkeley.
J.~S.~was supported by the grant ERC-2012-AdG-320368-MCSK in the group of Rahul Pandharipande at ETH Z\"urich. Q.~Y.~was supported by the NSFC Young Scientists Fund 11701014.

\section{Moduli spaces of stable maps} \label{sec:2}
We discuss properties of the moduli spaces of stable maps to holomorphic symplectic varieties, and introduce tools from Gromov--Witten theory.

\subsection{Dimensions} \label{Subsection_Dimensions}
Let $X$ be a holomorphic symplectic variety of dimension~$2n$, and let $\beta \in H_2(X, \mathbb{Z})$ be an \emph{irreducible} curve class. We show that the moduli space $\Mbar_{0,1}(X,\beta)$ of genus $0$ pointed stable maps to $X$ in class~$\beta$ is pure of the expected dimension.

Let $M$ be an irreducible component of $\Mbar_{0,1}(X,\beta)$. We know \emph{a priori}
\[
\dim M \geq \int_\beta c_1(X) + \dim X -1 = 2n-1.
\]
Consider
the restriction of the evaluation map to $M$,
\begin{equation}\label{resev}
\mathrm{ev}: M \rightarrow Z = \mathrm{ev}(M) \subset X.
\end{equation}

\begin{prop}\label{prop2.1}
If a general fiber of \eqref{resev} is of dimension~$r-1$, then
\begin{enumerate}
\item[(i)] $\dim Z = 2n-r$, so that $\dim M = 2n - 1$;
\item[(ii)] $r \leq n$;
\item[(iii)] a general fiber of the MRC fibration\footnote{We refer to \cite{GHS} for the definition and properties of the maximal rationally connected (MRC) fibration.} $Z \dashrightarrow B$ is of dimension $r$.
\end{enumerate}
\end{prop}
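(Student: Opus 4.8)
The plan is to sandwich $\dim Z$ between two bounds: the \emph{a priori} lower bound coming from the (reduced) expected dimension, and an upper bound coming from the holomorphic symplectic form. The lower bound $\dim M \geq 2n-1$ was recorded just above, so writing $\mathrm{ev}\colon M \to Z$ with general fibers of dimension $r-1$ gives immediately
\[ \dim Z = \dim M - (r-1) \geq (2n-1)-(r-1) = 2n-r. \]
All the content is therefore in the reverse inequality $\dim Z \leq 2n-r$, which must be extracted from the symplectic form $\sigma \in H^0(X,\Omega^2_X)$. The geometric picture I want to establish is that the rational curves of the family are tangent to the null (characteristic) foliation of $\sigma|_Z$, that $Z$ is coisotropic of codimension $r$, and that the leaves of this foliation are the MRC fibers.

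The key symplectic input I would prove first is the following \emph{isotropy lemma}. Let $f\colon \mathbb{P}^1 \to X$ represent a general curve $C$ of the family, and let $\dot f \in H^0(\mathbb{P}^1, f^*T_X)$ be any infinitesimal deformation. Contracting $\sigma$ with the tangent field $f'$ to the curve defines a map $f^*T_X \to \Omega_{\mathbb{P}^1}$, $v \mapsto \sigma(v, f')$, under which $f'$ itself maps to $\sigma(f',f')=0$. Applying this to $\dot f$ produces a global section of $\Omega_{\mathbb{P}^1} \cong \mathcal{O}(-2)$, which must vanish since $H^0(\mathbb{P}^1,\mathcal{O}(-2))=0$. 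Hence $\sigma(\dot f(t), f'(t)) = 0$ for all $t \in \mathbb{P}^1$. Evaluating at the marked point $t_0$ (where $z = f(t_0)$), and using that $T_z Z = \mathrm{im}(d\,\mathrm{ev})$ is spanned by the vectors $\dot f(t_0)$ together with the curve direction $f'(t_0)$, I conclude $T_z Z \subseteq (T_z C)^{\perp}$ for \emph{every} curve $C$ of the family through $z$. Equivalently $f'(t_0) \in (T_z Z)^{\perp} \cap T_z Z = \ker(\sigma|_{T_z Z})$, so the curves are tangent to the null foliation, and if $V_z$ denotes the linear span of all curve tangents through $z$ then $V_z \subseteq \ker(\sigma|_{T_z Z})$ is isotropic and $T_z Z \subseteq V_z^{\perp}$.

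Now I close the dimension count. Nondegeneracy of $\sigma$ gives $\dim V_z^{\perp} = 2n - \dim V_z$, so $T_z Z \subseteq V_z^{\perp}$ yields $\dim Z \leq 2n - \dim V_z$; combined with $\dim Z \geq 2n-r$ this already forces $\dim V_z \leq r$. To obtain equality, and with it the reverse inequality $\dim Z \le 2n-r$, I must show $\dim V_z \geq r$. This is where I expect \textbf{the main obstacle} to lie: the tangent directions of the $(r-1)$-dimensional family of curves through a general $z$ a priori form a subvariety of $\mathbb{P}(T_z X)$ of dimension at most $r-1$, and I need this to be \emph{exactly} $r-1$ — i.e. the tangent map from the fiber $\mathrm{ev}^{-1}(z)$ to $\mathbb{P}(T_z X)$ is generically finite, ruling out a positive-dimensional subfamily of curves sharing a tangent line at $z$. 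Granting this, $V_z$ has dimension $r$, whence $\dim Z = 2n-r$ (giving (i) and $\dim M = 2n-1$); moreover $\dim(T_z Z)^{\perp} = r$ and $V_z \subseteq \ker(\sigma|_{T_z Z}) \subseteq (T_z Z)^{\perp}$ force $\ker(\sigma|_{T_z Z}) = (T_z Z)^{\perp}$, so $Z$ is coisotropic and its null foliation has rank $r$. Since $\ker(\sigma|_{T_z Z})$ is isotropic, $r \leq n$, which is (ii).

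For (iii), I would note that $\sigma|_Z$ has generically constant rank, so by closedness of $\sigma$ its null distribution is integrable, defining a foliation whose general leaves have dimension $r$. The previous step shows each rational curve of the family is tangent to this foliation, hence contained in a leaf; as the curves sweep out $Z$, they sweep out the general leaf, which is therefore uniruled, and in fact rationally connected by the family. Because the curves cannot leave their leaf, no rational curve joins distinct leaves, so the leaves are precisely the fibers of the MRC fibration $Z \dashrightarrow B$. Their common dimension is $r$, giving (iii).
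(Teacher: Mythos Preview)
Your overall strategy---sandwiching $\dim Z$ between the lower bound from the expected dimension and an upper bound coming from the symplectic form---matches the paper's, but the implementation of the upper bound differs, and the step you flag as ``the main obstacle'' is exactly where your argument is incomplete. You work infinitesimally: $V_z$ is the linear span of tangent directions at $z$ to curves of the family, and you need $\dim V_z \geq r$, i.e.\ generic finiteness of the tangent map $\ev^{-1}(z) \to \mathbb{P}(T_z X)$. You do not establish this, and it is a genuine gap: nothing you have written rules out a positive-dimensional subfamily of curves through $z$ sharing a tangent line, so the key inequality $\dim Z \leq 2n - r$ is left unproved.

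The paper circumvents this entirely by replacing your tangent span $V_z$ with the \emph{swept locus} $V_x \subset Z$, the closure of the union of all curves of the family through a general point $x$. Since $\beta$ is irreducible the family is unsplit, and Koll\'ar's dimension formula \cite[IV, Proposition~2.5]{Kol} gives $\dim T = \dim Z + \dim V_x - 2$, hence $\dim V_x = r$ directly---no finiteness of a tangent map is needed. The symplectic input then enters at the level of the MRC fibration rather than the null foliation: the MRC fiber through $x$ contains $V_x$ and so has dimension $\geq r$, and Mumford's argument (the holomorphic $2$-form vanishes along rationally connected fibers, \cite[Lemma~1.1]{V2}) forces $\dim Z \leq 2n - r$ and $r \leq n$ simultaneously. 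Your treatment of (iii) via the null foliation is close in spirit, but the assertion that ``no rational curve joins distinct leaves'' is not justified as stated; the clean argument is the converse inclusion---the MRC fiber, being rationally connected, has tangent space contained in the null distribution and hence lies inside a single leaf.
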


\begin{proof}
Since the curve class $\beta$ is irreducible, the family of rational curves $M \rightarrow T \subset \Mbar_{0,0}(X, \beta)$ viewed as in $X$ is unsplit in the sense of \cite[IV, Definition 2.1]{Kol}. Given a general point $x \in Z$, let $T_{x} \subset T$ be the Zariski-closed subset parametrizing maps passing through $x$. Consider the universal family $ \CC_x \to T_{x}$ and the restricted evaluation map
\begin{equation*}\label{vx}
\mathrm{ev}: \CC_x \to V_{x} = \mathrm{ev}(\CC_{x}) \subset Z.
\end{equation*}
By \cite[IV, Proposition 2.5]{Kol}, we have
\[\dim T = \dim Z + \dim V_{x} - 2.\]
Hence $\dim V_x = \dim M - \dim Z + 1 = r$. In other words, rational curves through a general point of $Z$ cover a Zariski-closed subset of dimension $r$.

A general fiber of the MRC fibration $Z \dashrightarrow B$ is thus of dimension~$\geq r$. By an argument of Mumford (see \cite[Lemma 1.1]{V2}), this implies $\dim Z \leq 2n-r$ and $r \leq n$. On the other hand, since $\dim M \geq 2n - 1$, we have \[\dim Z = \dim M - (r-1) \geq 2n-r.\]
Hence there is equality $\dim Z = 2n-r$, and the dimension of a general fiber of $Z \dashrightarrow B$ is exactly $r$.
\end{proof}

Proposition \ref{prop2.1} shows that $\Mbar_{0,1}(X,\beta)$ is pure of the expected dimension~$2n - 1$ and justifies the decomposition~\eqref{decomp1}. Similar arguments have also appeared in \cite[Theorem 4.4]{AV} and \cite[Proposition 4.10]{BaL}.

\subsection{Gromov--Witten theory}

Let $X$ be a holomorphic symplectic variety of dimension~$2n$, and let $\beta \in H_2(X, \mathbb{Z})$ be an arbitrary curve class. By Li--Tian \cite{LT} and Behrend--Fantechi~\cite{BF}, the moduli space of stable maps $\Mbar_{0,m}(X,\beta)$ carries a (reduced\footnote{Since $X$ is holomorphic symplectic, the (standard) virtual fundamental class on the moduli space vanishes. The theory is nontrivial only after reduction; see \cite[Section 2.2]{MP} and~\cite[Section 0.2]{Ob2}. The virtual fundamental class is always assumed to be reduced in this paper.})
virtual fundamental~class
\[ [ \Mbar_{0,m}(X,\beta) ]^{\text{vir}} \in H_{2\mathrm{vdim}}( \Mbar_{0,m}(X,\beta), \BQ). \]
It has the following basic properties.

\begin{enumerate}
\item[(a)] \textit{Virtual dimension.} The virtual fundamental class is of dimension
\begin{equation}\label{vdim}
\mathrm{vdim} =  2n - 2 + m.
\end{equation}

\item[(b)] \textit{Expected dimension.} If $\Mbar_{0,m}(X,\beta)$ is pure of the expected dimension~\eqref{vdim}, then the virtual and the ordinary fundamental classes agree:
\[ [ \Mbar_{0,m}(X, \beta) ]^{\text{vir}} = [ \Mbar_{0,m}(X, \beta) ]. \]

\item[(c)] \textit{Deformation invariance.} Let $\pi : \CX \to B$ be a family of holomorphic symplectic varieties, and let $\beta \in H^0(B, R \pi^{4n-2}_{\ast} \BZ)$ be a class
which restricts to a curve class in $H_2(X_b,\BZ)$ on each fiber.\footnote{We have suppressed an application of Poincar\'e duality here.
Same with the definition of $\GW_{\beta}$ and $\Phi_2$ in Section~\ref{gwcorr} below.} Then there exists a class on the moduli space of relative stable maps
\[ [ \Mbar_{0,m}(\CX/B,\beta) ]^{\text{vir}} \in H_{2(\mathrm{vdim} + \dim B)} ( \Mbar_{0,m}(\CX/B,\beta), \BQ) \]
such that for every fiber $ X_b \hookrightarrow \CX$, the inclusion $\iota_b: b \hookrightarrow B$ induces
\[ \iota_b^{!} [ \Mbar_{0,m}(\CX/B,\beta) ]^{\text{vir}} = [ \Mbar_{0,m}(X_b, \beta) ]^{\text{vir}}. \]
Here $\iota_b^{!}$ is the refined Gysin pullback. In particular, intersection numbers of $[ \Mbar_{0,m}(X, \beta) ]^{\text{vir}}$ against cohomology classes pulled back from $X$
via the evaluation maps
\begin{equation*} \ev_i : \Mbar_{0,m}(X,\beta) \to X,\quad (f, x_1, \ldots, x_m) \mapsto f(x_i)
\end{equation*}
are invariant under deformations of $(X,\beta)$ which keep $\beta$ of Hodge type.
\end{enumerate}

\subsection{Gromov--Witten correspondence} \label{gwcorr}

Let $X, \beta$ be as in Section~\ref{Subsection_Dimensions}. The evaluation maps from the $2$-pointed moduli space
\[
\begin{tikzcd}
{} & \Mbar_{0,2}(X,\beta) \ar{rd}{\ev_2} \ar[swap]{ld}{\ev_1} & \\
X & & X
\end{tikzcd}
\]
induce an action on cohomology:
\begin{equation} \label{GWcor}
\GW_{\beta} : H^i(X, \mathbb{Q}) \to H^i(X, \mathbb{Q}),\quad \gamma \mapsto \ev_{2 \ast}( \ev_1^{\ast}\gamma \cap [ \Mbar_{0,2}(X,\beta) ]^{\text{vir}} ).
\end{equation}
We call \eqref{GWcor} the \emph{Gromov--Witten correspondence}.

We introduce a factorization of \eqref{GWcor} as follows. Consider the diagram 
\begin{equation} \label{341234}
\begin{tikzcd}
\Mbar_{0,1}(X,\beta) \ar{r}{\ev} \ar{d}{p} & X \\
\Mbar_{0,0}(X,\beta)
\end{tikzcd}
\end{equation}
with $p$ the forgetful map (which is flat). We define morphisms
\begin{gather*}
\Phi_1: H^i(X, \BQ) \rightarrow H_{4n-2-i}(\Mbar_{0,0}(X,\beta), \BQ), \quad \!\gamma \mapsto p_\ast (\mathrm{ev}^\ast \gamma \cap [\Mbar_{0,1}(X,\beta)]^{\mathrm{vir}}), \\
\Phi_2 = \mathrm{ev}_\ast p^\ast: H_{4n-2-i}(\Mbar_{0,0}(X,\beta), \BQ) \rightarrow H^i(X, \BQ).
\end{gather*}
Since $\beta$ is irreducible, there is a Cartesian diagram of forgetful maps
\begin{equation*} 
\begin{tikzcd}
 {} & \Mbar_{0,2}(X,\beta)\arrow{dr}{} \arrow{dl}{} & \\
\Mbar_{0,1}(X,\beta) \arrow{dr}{} & & \Mbar_{0,1}(X,\beta)\arrow{dl}{} \\
{} & \Mbar_{0,0}(X,\beta). &
\end{tikzcd}
\end{equation*}
Hence the Gromov--Witten correspondence \eqref{GWcor} factors as
\begin{equation}\label{factorization}
{\GW}_\beta = \Phi_2 \circ \Phi_1: H^i(X,\BQ) \rightarrow H^i(X, \BQ).
\end{equation}


\subsection{Hodge classes}
Now let $(X, H)$ be a very general polarized holomorphic symplectic $4$-fold of $K3^{[2]}$ type. It is shown in \cite[Section 3]{OG} that the Hodge classes in $H^4(X, \BQ)$ are spanned by $H^2$ and $c_2(X)$.

A surface $\Sigma \subset X$ is \emph{Lagrangian} if the holomorphic $2$-form $\sigma$ on $X$ restricts to zero on $\Sigma$. The class of any Lagrangian surface is a positive multiple of
\begin{equation}\label{Markman_class}
v_X= 5H^2 - \frac{1}{6}(H,H)c_2(X) \in H^4(X,\BQ),
\end{equation}
where $(-,-)$ is the Beauville--Bogomolov form on $H^2(X, \BZ)$.\footnote{This follows from a direct calculation of the constraint $[\Sigma] \cdot \sigma=0 \in H^6(X, \mathbb{Q})$. The class $v_X$ was first calculated by Markman.}

\begin{prop}\label{prop2.4}
If $(X, H)$ is very general of $K3^{[2]}$ type and $\beta \in H_2(X, \mathbb{Z})$ is the primitive curve class, then for any Hodge class $\alpha \in H^4(X, \BQ)$, the~class
\[{\GW}_\beta(\alpha) \in H^4(X, \BQ)\] is proportional to $v_X$.
\end{prop}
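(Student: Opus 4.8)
The plan is to reduce the statement to two claims about $\GW_\beta(\alpha)$: first, that it is a Hodge class, and second, that it is annihilated by cup product with the holomorphic $2$-form $\sigma$. Granting these, a short linear-algebra argument using the computation of \cite[Section 3]{OG} finishes the proof. For the first claim I would use that, $\beta$ being irreducible, the moduli space $\Mbar_{0,2}(X,\beta)$ is pure of the expected dimension by Proposition~\ref{prop2.1}, so its virtual class equals its fundamental class and the correspondence class $\ev_{12\ast}[\Mbar_{0,2}(X,\beta)] \in H^{8}(X\times X,\BQ)$ is the class of an honest algebraic cycle. Hence $\GW_\beta$ is a morphism of Hodge structures, so $\GW_\beta(\alpha)$ is a Hodge class in $H^4(X,\BQ)$ and therefore lies in the span of $H^2$ and $c_2(X)$.

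The main work is to show $\GW_\beta(\alpha)\cup\sigma = 0 \in H^6(X,\BC)$. Here I would exploit the factorization $\GW_\beta = \Phi_2\circ\Phi_1$ of \eqref{factorization} through the surface $\Mbar_{0,0}(X,\beta)$. The input from the geometry of rational curves is an isotropy statement: since every fiber of $p\colon \Mbar_{0,1}(X,\beta)\to\Mbar_{0,0}(X,\beta)$ is a genus-$0$ curve, the form $\sigma$ restricts to zero on each fiber, and because the fibers are rational one obtains $\ev^\ast\sigma = p^\ast\tau$ for a class $\tau \in H^{2,0}(\Mbar_{0,0}(X,\beta))$. Writing $\Phi_1(\alpha)$ via Poincar\'e duality as a class on the surface and using the projection formula,
\[ \GW_\beta(\alpha)\cup\sigma = \ev_\ast\!\big(p^\ast\Phi_1(\alpha)\cup\ev^\ast\sigma\big) = \ev_\ast p^\ast\big(\Phi_1(\alpha)\cup\tau\big). \]
Now $\Phi_1(\alpha)$ is a $(1,1)$-class on $\Mbar_{0,0}(X,\beta)$, being the image of the Hodge class $\alpha$ under the algebraic correspondence $\Phi_1$. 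Hence $\Phi_1(\alpha)\cup\tau$ has Hodge type $(3,1)$ and lives in the top cohomology $H^4$ of a surface, where only type $(2,2)$ survives; therefore $\Phi_1(\alpha)\cup\tau = 0$ and so $\GW_\beta(\alpha)\cup\sigma = 0$.

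To conclude, I would run linear algebra on the two-dimensional space $\langle H^2, c_2(X)\rangle$ of Hodge classes. Cup product with $\sigma$ maps this space into $H^{4,2}(X)$, which is one-dimensional for $K3^{[2]}$ type. A Fujiki-relation computation gives $\int_X H^2\cup\sigma\cup\bar\sigma \neq 0$, so $H^2\cup\sigma\neq 0$ and the map $\gamma\mapsto\gamma\cup\sigma$ has a one-dimensional kernel. Since $v_X$ is a Lagrangian class it satisfies $v_X\cup\sigma = 0$ (the computation in the footnote to \eqref{Markman_class}), so this kernel is exactly $\BQ\cdot v_X$; as $\GW_\beta(\alpha)$ lies in the kernel by the previous paragraph, it is proportional to $v_X$. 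The main obstacle I anticipate is making the isotropy identity $\ev^\ast\sigma = p^\ast\tau$ rigorous in the presence of possible singularities of the moduli spaces: one should pass to smooth proper models and use that holomorphic $(2,0)$-classes and their pullbacks behave well under a forgetful map with rationally connected fibers. Everything else is formal once purity guarantees that the relevant classes are algebraic.
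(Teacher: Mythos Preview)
Your approach is correct and essentially matches the paper's: both factor $\GW_\beta$ through the surface $\Mbar_{0,0}(X,\beta)$ and use the relation $\ev^*\sigma = p^*\sigma'$. The paper phrases the vanishing step geometrically rather than via Hodge types---writing $\Phi_1(\alpha)\in H_2(\Mbar_{0,0}(X,\beta),\BQ)$ as a combination of curve classes $[C]$ and noting that each $\ev(p^{-1}(C))\subset X$ is Lagrangian since $\sigma'|_C=0$ for dimension reasons---which neatly sidesteps the singularity obstacle you flag at the end, since no Hodge structure on the moduli space is required, only the existence of the form $\sigma'$.
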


\begin{proof}
We use the factorization \eqref{factorization}. For any Hodge class $\alpha \in H^4(X, \BQ)$, the class
\[
\Phi_1(\alpha) \in H_2(\Mbar_{0,0}(X,\beta), \BQ) 
\]
is represented by curves. Hence ${\GW}_\beta(\alpha)$ can be expressed as a linear combination of classes of the form
\[
[\mathrm{ev}(p^{-1}(C))] \in H^4(X, \BQ)
\]
with $C \subset \Mbar_{0,0}(X,\beta)$ a curve.

Moreover, we have
\[
\mathrm{ev}^\ast \sigma  = p^\ast \sigma' 
\]
for some holomorphic $2$-form $\sigma'$ on $\Mbar_{0,0}(X,\beta)$. Hence any surface of the form $\mathrm{ev}(p^{-1}(C))$ is Lagrangian, and the proposition follows.
\end{proof}

Proposition \ref{prop2.4} implies that the class $v_X$ in \eqref{Markman_class} is an eigenvector of the Gromov--Witten correspondence
 \[
 {\GW}_\beta: H^4(X, \BQ) \rightarrow H^4(X, \BQ).
 \]
An explicit formula for $\GW_{\beta}$ was calculated in \cite{Ob2} and is recalled in Section~\ref{appcor}.

\section{Gromov--Witten calculations} \label{GW_Appendix}

In this Section, we prove Theorem \ref{A3} using formulas for the $1$-pointed Gromov--Witten class in the $K3^{[n]}$ case based on \cite{Ob2}. We also present formulas for the Gromov--Witten correspondence in the $K3^{[2]}$ case, which will be used in Section \ref{sec:1}. 


\subsection{Quasi-Jacobi forms}
Jacobi forms are holomorphic functions in variables\footnote{Let $\BH = \{ \tau \in \BC : \mathrm{Im}(\tau)>0 \}$ denote the upper half-plane.} $(\tau,z) \in \BH \times \BC$ with modular properties; see \cite{EZ} for an introduction.
Here we will consider Jacobi forms as formal power series in the variables
\[ q = e^{2 \pi i \tau}, \quad y = -e^{2 \pi i z} \]
expanded in the region $|q|<|y|<1$. 

Recall the Jacobi theta function
\[ \Theta(q,y) = (y^{1/2} + y^{-1/2}) \prod_{m \geq 1} \frac{ (1 + yq^m) (1 + y^{-1}q^m)}{ (1-q^m)^2 } \]
and the Weierstra{\ss} elliptic function
\[ \wp(q,y) = \frac{1}{12} - \frac{y}{(1+y)^2} + \sum_{m \geq 1} \sum_{d|m} d ((-y)^d - 2 + (-y)^{-d}) q^{m}. \]
Define Jacobi forms $\phi_{k,1}$ of \emph{weight} $k$ and \emph{index} $1$ by
\[ \phi_{-2,1}(q,y) = \Theta(q,y)^2, \quad \phi_{0,1}(q,y) = 12 \Theta(q,y)^2 \wp(q,y). \]

We also require the weight $k$ and index $0$ Eisenstein series
\[ E_{k}(q) = 1 - \frac{2k}{B_{k}} \sum_{m \geq 1} \sum_{d |m} d^{k-1} q^m, \quad k=2,4,6, \]
where the $B_k$ are the Bernoulli numbers, and the modular discriminant
\[ \Delta(q) = \frac{E_4^3 - E_6^2}{1728} = q \prod_{m\geq1} (1-q^m)^{24}. \]

We define the ring of quasi-Jacobi forms of even weight as the free polynomial algebra
\[ \CJ = \BQ[ E_2, E_4, E_6, \phi_{-2,1}, \phi_{0,1}]. \]
The weight/index assignments to the generators induce a bigrading
\[ \CJ = \bigoplus_{k \in \BZ} \bigoplus_{m \geq 0} \CJ_{k,m} \]
by weight $k$ and index $m$.

\begin{lemma}[{\cite[Theorem 2.2]{EZ}}] \label{Lemma_Jac_coeff}
Let $\phi \in \CJ_{\ast, m}$ be a quasi-Jacobi form of index $m \geq 1$.
For all $d, r \in \BZ$, the coefficient $[ \phi ]_{q^d y^r}$ only depends on $2d-\frac{r^2}{2m}$
and the set $\{ \pm [r] \}$, where $[r] \in \BZ/2m \BZ$ is the residue of $r$.
\end{lemma}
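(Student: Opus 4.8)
The plan is to deduce the statement from the behaviour of $\phi$ in the elliptic variable alone, the point being that the three modular generators $E_2, E_4, E_6$ carry index $0$ and do not involve $y$. Write an arbitrary element of $\CJ_{\ast, m}$ as a finite sum of terms $g(q)\,\psi(q, y)$ with $g \in \BQ[E_2, E_4, E_6]$ and $\psi$ a monomial $\phi_{-2,1}^{a} \phi_{0,1}^{b}$ of total index $a + b = m$. Each such $\psi$ is a genuine (weak) holomorphic Jacobi form of index $m$, being a product of the index-$1$ forms $\phi_{-2,1}$ and $\phi_{0,1}$, and multiplication by the $y$-independent factor $g(q)$ does not affect the behaviour in $y$. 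Hence every $\phi \in \CJ_{\ast, m}$ satisfies both the index-$m$ elliptic transformation law and the evenness $\phi(q, y^{-1}) = \phi(q, y)$, despite not being modular in $\tau$.

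First I would record the elliptic transformation. Under $z \mapsto z + \tau$ one has $y \mapsto yq$, and the index-$m$ law reads $\phi(\tau, z + \tau) = q^{-m} y^{-2m}\, \phi(\tau, z)$. Writing $\phi = \sum_{d, r} c(d, r)\, q^d y^r$ with $c(d, r) = [\phi]_{q^d y^r}$ and comparing Fourier coefficients on both sides yields the recursion
\[ c(d, r) = c(d + r + m,\, r + 2m). \]
A direct check shows that the discriminant $4md - r^2$ and the residue of $r$ modulo $2m$ are both preserved by $(d, r) \mapsto (d + r + m,\, r + 2m)$. Since $4md - r^2 = 2m\bigl(2d - \tfrac{r^2}{2m}\bigr)$, it follows that $c(d, r)$ depends only on $2d - \tfrac{r^2}{2m}$ and on $[r] \in \BZ/2m\BZ$.

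Next I would exploit the evenness. The functions $\Theta(q, y)$ and $\wp(q, y)$ are visibly invariant under $y \mapsto y^{-1}$, hence so are $\phi_{-2,1}$, $\phi_{0,1}$, and therefore every element of $\CJ$. Comparing coefficients in $\phi(q, y^{-1}) = \phi(q, y)$ gives $c(d, r) = c(d, -r)$, which refines the previous dependence on $[r]$ to a dependence on the unordered set $\{ \pm [r] \}$. Together with the elliptic step this proves the lemma.

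This is the Eichler--Zagier argument, and the only step requiring thought is the role of the quasimodular part. One might fear that the presence of $E_2$ invalidates the transformation theory; but the conclusion concerns only the elliptic variable $z$ (equivalently $y$), in which the index-$0$ generators are inert, so the failure of modularity in $\tau$ never enters. Thus there is no real obstacle, and the classical proof carries over verbatim to the quasi-Jacobi ring $\CJ$.
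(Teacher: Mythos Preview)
The paper does not actually prove this lemma; it is stated with the citation \cite[Theorem~2.2]{EZ} and no further argument. Your proposal is a correct, self-contained reconstruction of the Eichler--Zagier proof, together with the one extra remark needed here: since the quasimodular generators $E_2,E_4,E_6$ have index~$0$ and are independent of $y$, elements of $\CJ_{\ast,m}$ still satisfy the index-$m$ elliptic transformation law and the symmetry $y\mapsto y^{-1}$, which is all the argument uses. So your proof is complete and in fact supplies what the paper leaves to the reference.
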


By Lemma~\ref{Lemma_Jac_coeff}, we may denote the $q^d y^r$-coefficient of $\phi$ by
\begin{equation} \phi\left[ 2d-\frac{r^2}{2m},\, \pm [r] \right] = [ \phi ]_{q^d y^r}. \label{coff_def} \end{equation}
If $\phi$ is of index $0$, we set
$\phi[ 2d, 0 ] = [ \phi ]_{q^d}$.
Lemma~\ref{Lemma_Jac_coeff} remains valid if we replace $\phi$ by $f(q) \phi$ for any Laurent series $f(q)$, and we keep the notation as in \eqref{coff_def} for the coefficients.

We will mainly focus on the quasi-Jacobi form
\begin{equation} \label{phi}
\phi = \left( - \wp + \frac{1}{12} E_2 \right) \Theta^2.
\end{equation}
The following are some positivity results.

\begin{lemma} \label{Positivity_1} Let $\phi$ be as in \eqref{phi}. Then
$\phi[ D ] \geq 0$ for all $D$ and
\[ \phi[ D ] > 0 \Longleftrightarrow D = 2n- \frac{r^2}{2} \geq 0 \text{ for some } n, r \in \BZ. \]
\end{lemma}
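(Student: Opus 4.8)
\textbf{Proof proposal for Lemma \ref{Positivity_1}.}

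The plan is to compute the Fourier--Jacobi expansion of $\phi = \left( -\wp + \frac{1}{12} E_2 \right) \Theta^2$ explicitly enough to read off the sign of each coefficient, and then to identify exactly which index-weight data $D = 2d - \frac{r^2}{2}$ can occur with a nonzero (hence positive) coefficient. The key structural input is that $\phi$ has index $1$, so by Lemma~\ref{Lemma_Jac_coeff} its $q^d y^r$-coefficient depends only on the single number $D = 2d - \frac{r^2}{2}$ (and on $\pm[r]$, but for index $1$ the residue carries no extra information since $\BZ/2\BZ$ is determined by parity, which is itself fixed by $D$). Thus it suffices to understand the coefficients along any convenient slice, and the notation $\phi[D]$ is well-defined.

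First I would combine the two factors into a manageable form. Writing $-\wp + \frac{1}{12}E_2$ and recalling $\phi_{-2,1} = \Theta^2$, note that the combination $\left(-\wp + \frac{1}{12}E_2\right)\Theta^2$ is a weight $0$, index $1$ quasi-Jacobi form; in fact up to normalization it is a standard object whose expansion can be extracted from the generating series in \cite{Ob2}. The cleanest route is to expand $\wp(q,y)$ and $\Theta(q,y)^2$ as power series in $q$ with Laurent coefficients in $y$, using the product formula for $\Theta$ and the double-sum formula for $\wp$ given in the excerpt, and to substitute the $q$-expansion of $E_2$. Because the answer depends only on $D = 2d - \frac{r^2}{2}$, I would organize the computation by extracting, for each fixed value of $D$, the corresponding coefficient; the admissible $(d,r)$ with $2d - \frac{r^2}{2} = D$ form a single $\pm[r]$-orbit, so consistency of the coefficients across that orbit is automatic from Lemma~\ref{Lemma_Jac_coeff} and need not be rechecked.

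The heart of the matter is to exhibit positivity. I expect the coefficients $\phi[D]$ to be given by a manifestly nonnegative arithmetic expression --- plausibly a sum-of-divisors type quantity or a count with nonnegative multiplicities, reflecting the interpretation of $\phi$ as a generating series of genuine enumerative (Gromov--Witten) quantities on the Hilbert scheme of an elliptic $K3$. Concretely, I would aim to show that $\phi[D]$ equals a sum over positive divisors, or over lattice points, in which every term is positive, so that $\phi[D] \geq 0$ always and $\phi[D] > 0$ precisely when the index set is nonempty. The nonemptiness of that index set is exactly the condition $D = 2n - \frac{r^2}{2} \geq 0$ for some integers $n, r$: the constraint $D \geq 0$ rules out the negative values, and the requirement that $D$ be expressible in the form $2n - \frac{r^2}{2}$ with $n, r \in \BZ$ (equivalently, that $2D$ be an integer of the correct residue class modulo the relevant congruence) singles out the values of $D$ that actually arise as $q^d y^r$-exponents. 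Values of $D$ not of this form correspond to no monomial $q^d y^r$ in an index-$1$ form and hence have coefficient $0$.

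The main obstacle will be the positivity itself: verifying $\phi[D] \ge 0$ for every $D$ is not a formal consequence of $\phi$ being a quasi-Jacobi form, since generic elements of $\CJ$ have coefficients of mixed sign. I anticipate that the proof must use the specific shape of $\phi$ --- in particular the precise cancellation engineered by the combination $-\wp + \frac{1}{12}E_2$, which removes the constant term of $\wp$ and makes the resulting $q$-series have nonnegative coefficients (as for instance $-\wp + \frac{1}{12}E_2 = \frac{y}{(1+y)^2} + (\text{positive } q\text{-series})$, with the leading term contributing nonnegatively once multiplied by $\Theta^2 = \phi_{-2,1}$). Establishing that the product of this series with $\Theta^2$ has all coefficients nonnegative is the delicate step; I would handle it either by finding a closed combinatorial formula for $\phi[D]$ with visibly nonnegative terms, or by factoring $\phi$ into pieces each of which is separately seen to have nonnegative $q,y$-expansion. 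Once positivity and the exact support are in hand, the stated equivalence follows immediately.
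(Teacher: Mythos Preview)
Your outline correctly identifies that the whole lemma hinges on exhibiting a manifestly nonnegative closed expression for $[\phi]_{q^n y^k}$, but it does not locate that expression, and the two concrete routes you suggest will not reach it. The factoring heuristic fails: the series $-\wp + \tfrac{1}{12}E_2$ does \emph{not} have nonnegative $q,y$-coefficients. A short computation from the expansions in the paper gives
\[
-\wp + \tfrac{1}{12}E_2 \;=\; \frac{y}{(1+y)^2} - \sum_{m \geq 1} \sum_{d \mid m} d\bigl((-y)^d + (-y)^{-d}\bigr) q^m,
\]
and both the $q^0$-term (as a Laurent series in $y$) and the higher $q^m$-terms (for even $d$) have coefficients of both signs. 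So positivity is not visible termwise in the product, and no naive divisor-sum interpretation falls out either.

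The missing ingredient is a differential identity: one has $-\wp + \tfrac{1}{12}E_2 = D_y^2 \log \Theta$ (with $D_y = y\,d/dy$), hence
\[
\phi \;=\; \Theta^2 \, D_y^2 \log \Theta \;=\; D_y^2(\Theta)\,\Theta - \bigl(D_y \Theta\bigr)^2.
\]
Writing $\Theta = \sum c(n,r) q^n y^r$ and symmetrizing, this yields
\[
[\phi]_{q^n y^k} \;=\; \tfrac{1}{2} \sum_{\substack{n_1+n_2=n \\ k_1+k_2=k}} c(n_1,k_1)\,c(n_2,k_2)\,(k_1-k_2)^2 \;\geq\; 0,
\]
which is exactly the ``closed combinatorial formula with visibly nonnegative terms'' you were hoping for. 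The positivity of the $c(n,r)$ follows from the Jacobi triple product $\Theta = \vartheta_1/\eta^3$, and the support condition then drops out by a direct check on $k \in \{0,1\}$. Without this identity your plan remains a wish rather than a proof; with it, the argument is short.
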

\begin{proof}
By the Jacobi triple product, we have $\Theta = \vartheta_1 / \eta^3$ where
\[ \vartheta_1(q,y) = \sum_{n \in \BZ + \frac{1}{2}} y^n q^{\frac{1}{2} n^2}, \quad \eta(q) = q^{\frac{1}{24}} \prod_{n \geq 1} (1-q^n). \]
If we write $\Theta = \sum_{n,r} c(n,r) q^n y^r$, we therefore get
\[ c(n,r) > 0 \Longleftrightarrow \Big( r \in \frac{1}{2} \BZ \setminus \BZ \text{ and } 2n \geq r^2 - \frac{1}{4} \Big) \]
and $c(n,r) = 0$ otherwise. 
By the explicit expressions for the action of differential operators on quasi-Jacobi forms in \cite[Appendix B]{Ob2},
we have the identity
\[ \phi = \Theta^2 D_y^2 \log \Theta = D_y^2(\Theta) \Theta - D_y(\Theta)^2. \]
Hence
\begin{equation}
\begin{aligned} \big[ \phi \big]_{q^n y^k}
& = \sum_{\substack{n = n_1 + n_2 \\ k = k_1 + k_2}} c(n_1, k_1) c(n_2, k_2) (k_1^2 - k_1 k_2) \\
& = \frac{1}{2} \sum_{\substack{n = n_1 + n_2 \\ k = k_1 + k_2}} c(n_1, k_1) c(n_2, k_2) (k_1 - k_2)^2 \geq 0.
\label{134}
\end{aligned}
\end{equation}
Since $\phi$ is quasi-Jacobi, the coefficient $\big[ \phi \big]_{q^n y^k}$ only depends on
$4n-k^2$, hence we may assume $k \in \{ 0, 1 \}$. The result now follows from \eqref{134} by a direct check.
\end{proof}

\subsection{Beauville--Bogomolov form}\label{BB_Section}
Let $X$ be a holomorphic symplectic variety of dimension $2n$.
The Beauville--Bogomolov form on $H^2(X,\BZ)$ induces an embedding
\[ H^2(X,\BZ) \hookrightarrow H_2(X,\BZ), \quad \alpha \mapsto (\alpha, - ), \]
which is an isomorphism after tensoring with $\BQ$. Let
\begin{equation} \label{extbb}
( - , - ) : H_2(X,\BZ) \times H_2(X,\BZ) \to \BQ
\end{equation}
denote the unique $\BQ$-valued extension of the Beauville--Bogomolov form.

If $X$ is of $K3^{[n]}$ type with $n \geq 2$, there is an isomorphism of abelian~groups
\[ r : H_2(X,\BZ)/H^2(X,\BZ) \to \BZ / (2n-2) \BZ \]
such that $r(\alpha) = 1$ for some $\alpha \in H_2(X, \BZ)$ with $(\alpha, \alpha) = \frac{1}{2-2n}$.
The morphism~$r$ is unique up to multiplication by $\pm 1$. 

\subsection{Curve classes} \label{Subsection_curve_classes}
Consider a pair $(X,\beta)$ where $X$ is a holomorphic symplectic variety of $K3^{[n]}$ type, and $\beta \in H_2(X,\BZ)$ is a primitive curve class. The curve class $\beta$ has the following invariants:
\begin{enumerate}
\item[(i)] the Beauville--Bogomolov norm $(\beta, \beta) \in \BQ$, and
\item[(ii)] the residue $[ \beta ] \in H_2(X,\BZ) / H^2(X,\BZ)$.
\end{enumerate}
The \emph{residue set} of $\beta$ is the subset
\begin{equation*} \label{res_set} \pm [\beta] = \{ \pm r([\beta]) \} \subset \BZ / (2n-2) \BZ. \end{equation*}
It is independent of the choice of map $r$. If $n=1$, we set $\pm [\beta]=0$.

Given a (quasi-)Jacobi form $\phi$ of index $m=n-1$, we define
\[ \phi_{\beta} = \phi[ (\beta, \beta) , \pm [ \beta ] ]. \]


By Markman \cite{Mar} (see also \cite[Lemma 23]{Ob1}), two pairs $(X,\beta)$ and $(X', \beta')$ are deformation equivalent through a family of holomorphic symplectic manifolds
which keeps the curve class of Hodge type if and only if the norms and the residue sets of $\beta$ and $\beta'$ agree.
Hence, by identifying $H^{\ast}(X)$ with $H^{\ast}(X')$ via parallel transport and by property (c) of the virtual fundamental class,
the Gromov--Witten invariants of the pairs $(X,\beta)$ and $(X',\beta')$ are equal.\footnote{The (reduced) virtual fundamental class can also be defined via symplectic geometry and the twistor space of $X$; see \cite{BL}. Hence, the Gromov--Witten invariants are invariant also under (nonnecessarily algebraic) symplectic deformations of $(X, \beta)$ which keep~$\beta$ of Hodge type.
The invariance under nonalgebraic deformations is not needed for our application
to the Fano variety of lines in a cubic $4$-fold.} 

\subsection{Proof of Theorem \ref{A3}} \label{appuni}
Recall from \eqref{phi} the quasi-Jacobi form $\phi$. 

\begin{thm}[\cite{Ob2}] \label{thm_Uniruled}
Let $X$ be a holomorphic symplectic variety of $K3^{[n]}$ type, and let $\beta \in H_2(X,\BZ)$ be a primitive curve class. Then we have
\[ \ev_{\ast} [ \Mbar_{0,1}(X,\beta) ]^{\textup{vir}} 
= 
\left( \frac{\phi^{n-1}}{\Delta} \right)_{\beta} h \in H^2(X,\BQ)\]
where $h = (\beta, -) \in H^2(X,\BQ)$ is the dual of $\beta$ with respect to \eqref{extbb}.
\end{thm}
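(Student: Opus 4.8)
The plan is to prove the formula in two stages: first pin down the \emph{structure} of the class, showing it is forced to be a multiple of $h$, and then compute the single resulting scalar by reducing via deformation invariance to an explicit model where it assembles into a generating series. First I would note that $\ev_{\ast}[\Mbar_{0,1}(X,\beta)]^{\mathrm{vir}}$ is an algebraic, hence Hodge, class of codimension one: by the virtual dimension formula \eqref{vdim} the class $[\Mbar_{0,1}(X,\beta)]^{\mathrm{vir}}$ lies in $H_{4n-2}(\Mbar_{0,1}(X,\beta),\BQ)$, and its pushforward under the morphism $\ev$ lands in $H_{4n-2}(X,\BQ)=H^2(X,\BQ)$. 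It therefore suffices to treat a very general pair $(X,\beta)$ with the prescribed invariants $((\beta,\beta),\pm[\beta])$, since for such a pair the space of Hodge classes in $H^2(X,\BQ)$ is one-dimensional, spanned by the dual $h=(\beta,-)$ of the primitive curve class; thus
\[ \ev_{\ast}[\Mbar_{0,1}(X,\beta)]^{\mathrm{vir}} = c(\beta)\, h \]
for a single scalar $c(\beta)\in\BQ$. By property (c) of the virtual fundamental class together with Markman's classification recalled in Section~\ref{Subsection_curve_classes}, the intersection numbers of $[\Mbar_{0,1}(X,\beta)]^{\mathrm{vir}}$ against classes pulled back from $X$, and hence $c(\beta)$ itself, depend only on $((\beta,\beta),\pm[\beta])$; the general case then follows from the very general one by parallel transport, which fixes $h$. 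This is exactly the dependence encoded by the notation $(\phi^{n-1}/\Delta)_{\beta}$, so the remaining task is to evaluate $c(\beta)$.

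Next I would extract $c(\beta)$ as a single reduced genus-$0$ Gromov--Witten invariant. Pairing both sides against a test class $\gamma\in H^{4n-2}(X,\BQ)$ with $\int_X h\cup\gamma\neq 0$ gives
\[ c(\beta)\int_X h\cup\gamma = \int_{[\Mbar_{0,1}(X,\beta)]^{\mathrm{vir}}} \ev^{\ast}\gamma, \]
so $c(\beta)$ is determined by a one-pointed invariant of $X$. To evaluate it for all $\beta$ simultaneously I would pass to the special model $X=\Hilb^n(S)$ with $S$ an elliptic $K3$ surface, which is legitimate by the deformation invariance just invoked, and assemble the resulting invariants into a generating series in the variables $q,y$ of Section~\ref{GW_Appendix}. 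Here $q$ records the dependence on $(\beta,\beta)$ and $y$ records the residue $\pm[\beta]$, matching the indexing of Lemma~\ref{Lemma_Jac_coeff}: a curve class on $\Hilb^n(S)$ decomposes into a $K3$ part and a multiple of the exceptional class, and these two contributions are precisely what $q$ and $y$ keep track of.

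The heart of the argument, and the step I expect to be the main obstacle, is the identification of this generating series with $\phi^{n-1}/\Delta$. This is the computation carried out in \cite{Ob2}: the factor $1/\Delta$ is the Yau--Zaslow/Katz--Klemm--Vafa series governing rational curves on the $K3$ surface $S$, while the power $\phi^{n-1}$ reflects the product structure over the $n$ points and the local geometry of rational curves in $\Hilb^n(S)$ meeting the big diagonal. Establishing it requires the full genus-$0$ reduced Gromov--Witten theory of Hilbert schemes of $K3$ surfaces---via degeneration and the quantum-cohomology and Nakajima-operator formalism---together with the modularity input that forces the answer to be quasi-Jacobi of index $n-1$. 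Once this identification is in hand, reading off the $q^{d}y^{r}$-coefficient and invoking Lemma~\ref{Lemma_Jac_coeff} yields the stated value $(\phi^{n-1}/\Delta)_{\beta}$ for $c(\beta)$, which completes the proof.
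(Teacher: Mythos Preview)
Your proposal is correct and follows essentially the same route as the paper: reduce to a scalar multiple of $h$ by noting that a very general pair has Picard rank~$1$, use deformation invariance via Markman's result to pass to $\Hilb^n$ of an elliptic $K3$ surface, and then read off the scalar from the computation in \cite{Ob2}. The paper is slightly more concrete in the last step---it names the specific test class $[\mathsf{C}]$ with $\int_{[\mathsf{C}]}h=1$ and invokes \cite[Theorem~2]{Ob2} directly rather than discussing the structure of the generating series---but there is no substantive difference in strategy.
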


For the readers' convenience, we provide a proof of Theorem~\ref{thm_Uniruled} at the end of this section. Theorem~\ref{thm_Uniruled} together with the positivity of the Fourier coefficients of $\phi$ implies Theorem \ref{A3}.

\begin{proof}[Proof of Theorem \ref{A3}]
By Lemma~\ref{Positivity_1} the criterion in Theorem~\ref{A3} holds
if and only if 
\[ \left( \frac{\phi^{n-1}}{\Delta} \right)_{\beta} > 0, \]
hence by Theorem~\ref{thm_Uniruled} if and only if the pushforward
$\ev_{\ast} [ \Mbar_{0,1}(X,\beta) ]^{\textup{vir}}$ 
is nontrivial.
Since the pushforward is a class in $H^2(X,\BQ)$ supported on a uniruled subvariety, the first claim follows.
The second claim follows from Proposition~\ref{prop2.1} and property (b) of the virtual fundamental class.
\end{proof}

In the $K3^{[2]}$ case, we define
\[f = \frac{\phi}{\Delta} = \left( - \wp + \frac{1}{12} E_2 \right) \frac{\Theta^2}{\Delta}.\]
The first few values of $f_\beta$ are listed in the following table.\footnote{When $n = 2$, the value $(\beta, \beta) \in \BQ$ uniquely determines $\pm[\beta] \subset \BZ/2\BZ$.}

\begin{table}[ht]
{\renewcommand{\arraystretch}{1.5}\begin{tabular}{| c | c | c | c | c | c | c | c | c | c | c |}
\hline
$\!(\beta,\beta)$\! & $-\frac{5}{2}$ & $-2$ & $- \frac{1}{2}$ & $0$ & $\frac{3}{2}$ & $2$ & $\frac{7}{2}$ & $4$ & $\frac{11}{2}$ & $6$ \\
\hline
$f_{\beta}$ & $0$ & $1$ & $4$ & $30$ & $120$ & $504$ & $1980$ & $6160$ & $23576$ & $60720$ \\
\hline
\end{tabular}}
\caption{The first few multiplicities of uniruled divisors for $K3^{[2]}$.} \label{uniruled_table}
\vspace{-15pt}
\end{table}
%

\subsection{Gromov--Witten correspondence} \label{appcor}
In this section, we specialize to the $K3^{[2]}$ case. Recall the Gromov--Witten correspondence $\GW_{\beta}$ in \eqref{GWcor}. We also define
\[ g = \left( - \frac{12}{5} \wp -E_2 \right) \frac{\Theta^2}{\Delta}. \]

\begin{thm}[\cite{Ob2}] \label{ThmGWb_1} 
Let $X$ be a holomorphic symplectic $4$-fold of $K3^{[2]}$ type, and let $\beta \in H_2(X,\BZ)$ be a primitive curve class. If $(\beta, \beta) \neq 0$, then~$\GW_{\beta}$ is diagonalizable with eigenvalues
\[ \lambda_0 = 0, \quad \lambda_1 = (\beta, \beta) f_{\beta}, \quad \lambda_2 = (\beta, \beta) g_{\beta}, \]
and eigenspaces
\[ V_{\lambda_1} = \BQ\langle  h, h^3,  (h e_i)_{i=1,\ldots,22}\rangle, \quad V_{\lambda_2} = \BQ v. \]
Here $h = (\beta, -) \in H^2(X,\BQ)$ is the dual of $\beta$ with respect to \eqref{extbb}, $\{ e_i \}_{i=1,\ldots,22}$ is a basis of the orthogonal of $h$ in $H^2(X,\BQ)$,
and
\[v = 5 h^2 - \frac{1}{6}(\beta,\beta) c_2(X) \in H^4(X,\BQ).\]
\end{thm}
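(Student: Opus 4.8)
The plan is to determine the degree-preserving operator $\GW_{\beta}$ one isotypic piece at a time: first use deformation invariance to force $\GW_{\beta}$ to be equivariant for a large symmetry group, reducing it to finitely many scalars, and then evaluate those scalars with the inputs already available, namely Theorem~\ref{thm_Uniruled}, Proposition~\ref{prop2.4}, and the factorization~\eqref{factorization}. To set this up, I would first observe that $\GW_{\beta}$ preserves cohomological degree and is self-adjoint for the Poincar\'e pairing, the latter because the correspondence $\ev_{12\ast}[\Mbar_{0,2}(X,\beta)]^{\mathrm{vir}}$ is symmetric under interchanging the two markings. By property~(c) of the reduced virtual class this correspondence is invariant under parallel transport in families keeping $\beta$ of Hodge type; by Markman's description of the monodromy group \cite{Mar}, the resulting symmetries fill out a Zariski-dense subgroup of the stabilizer of $h$ in the orthogonal group of $\big(H^2(X,\BZ),(-,-)\big)$. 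Since $(\beta,\beta)\neq 0$ the class $h=(\beta,-)$ is non-isotropic, so $h^{\perp}$ is nondegenerate and $\GW_{\beta}$ commutes with the full $O(h^{\perp})$-action on $H^{\ast}(X,\BQ)$.

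Next I would decompose each graded piece into $O(h^{\perp})$-isotypic components. Using Verbitsky's theorem that cup product gives an isomorphism $\Sym^2 H^2(X,\BQ)\xrightarrow{\ \sim\ } H^4(X,\BQ)$ (an isomorphism by the count $\binom{24}{2}=b_4=276$), one finds
\[ H^2=\BQ h\oplus h^{\perp},\qquad H^4=\BQ h^2\ \oplus\ (h\cdot h^{\perp})\ \oplus\ \Sym^2_0 h^{\perp}\ \oplus\ \BQ\,c_2(X), \]
and dually $H^6\cong \BQ h^3\oplus(h^2\cdot h^{\perp})$, while $H^0,H^8$ are trivial. By Schur's lemma $\GW_{\beta}$ acts by a single scalar on each multiplicity-one irreducible and by a symmetric endomorphism on the two-dimensional trivial component $\langle h^2,c_2(X)\rangle\subset H^4$. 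Thus the eigenspaces are automatically unions of these pieces, and only the scalars on $\BQ h$, $h^{\perp}$, $h\cdot h^{\perp}$, $\Sym^2_0 h^{\perp}$, and the $2\times 2$ block remain to be pinned down.

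Now I would determine the structural scalars. On $H^2$ the factorization $\GW_{\beta}=\Phi_2\circ\Phi_1$ together with the divisor equation gives $\Phi_1(\gamma)=(\beta,\gamma)\,[\Mbar_{0,0}(X,\beta)]^{\mathrm{vir}}$ for $\gamma\in H^2$, whence by Theorem~\ref{thm_Uniruled}
\[ \GW_{\beta}(\gamma)=(\beta,\gamma)\,\ev_{\ast}[\Mbar_{0,1}(X,\beta)]^{\mathrm{vir}}=(\beta,\gamma)\,f_{\beta}\,h. \]
Taking $\gamma=h$ yields eigenvalue $\lambda_1=(\beta,\beta)f_{\beta}$ on $\BQ h$, while $\gamma\in h^{\perp}$ gives $0$; self-adjointness together with the nonvanishing Fujiki pairings $\int_X h^4$ and $\int_X h^2 e_j^2$ then transports these to $H^6$, giving $\lambda_1$ on $\BQ h^3$ and $0$ on $h^2\cdot h^{\perp}$. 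For $H^0$ the image $\ev_{2\ast}[\Mbar_{0,2}]^{\mathrm{vir}}$ vanishes since a $4$-dimensional cycle maps to the uniruled locus of dimension $\leq 3$, and for $H^8$ a very general point lies on no rational curve in class $\beta$; hence $H^0,H^8\subset\ker\GW_{\beta}$. Finally, Proposition~\ref{prop2.4} shows that on the trivial block $\langle h^2,c_2(X)\rangle$ the image of $\GW_{\beta}$ is the line $\BQ v$, so $v$ is an eigenvector and the complementary direction lies in the kernel.

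The main obstacle is the remaining batch of scalars, which equivariance alone does not fix: the eigenvalue of $\GW_{\beta}$ on $h\cdot h^{\perp}=\BQ\langle he_i\rangle$ (asserted to be $\lambda_1$), the vanishing on the large irreducible $\Sym^2_0 h^{\perp}$, and the precise value $\lambda_2=(\beta,\beta)g_{\beta}$ of the eigenvalue on $\BQ v$. These require the actual evaluation of the two-pointed reduced invariants. I would obtain them by computing the correspondence $\ev_{12\ast}[\Mbar_{0,2}(X,\beta)]^{\mathrm{vir}}\in H^8(X\times X,\BQ)$ on the special model $\Hilb^2$ of an elliptically fibered $K3$ surface, where the reduced theory is accessible through Nakajima operators and its generating series are identified with the quasi-Jacobi forms $f=(-\wp+\tfrac1{12}E_2)\Theta^2/\Delta$ and $g=(-\tfrac{12}{5}\wp-E_2)\Theta^2/\Delta$; this is precisely the computation of \cite{Ob2}. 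Deformation invariance then carries the answer to every $(X,\beta)$ of the stated type. Assembling all the pieces exhibits $H^{\ast}(X,\BQ)$ as a direct sum of eigenspaces, proving diagonalizability together with the claimed $V_{\lambda_1}=\BQ\langle h,h^3,(he_i)\rangle$ and $V_{\lambda_2}=\BQ v$.
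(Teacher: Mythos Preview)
Your proposal is correct and follows essentially the same strategy as the paper: monodromy equivariance under the stabilizer of $h$ to reduce $\GW_{\beta}$ to a handful of scalars, the divisor equation combined with Theorem~\ref{thm_Uniruled} for the $H^2$ and $H^6$ pieces, Proposition~\ref{prop2.4} to isolate the line $\BQ v$ inside the trivial block of $H^4$, and finally an explicit evaluation on $\Hilb^2$ of an elliptic $K3$ for the remaining constants.

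The one place where the paper is sharper is in how much has to be imported from \cite{Ob2}. You defer three scalars to that computation (the eigenvalue on $h\cdot h^{\perp}$, the vanishing on $\Sym^2_0 h^{\perp}$, and the eigenvalue on $\BQ v$), whereas the paper pins down the first two structurally using the holomorphic symplectic form. First, $\GW_{\beta}$ is equivariant for cup product by $\sigma$ (the quantum product with a primitive class commutes with multiplication by $H^{2,0}$), so $\GW_{\beta}(h\sigma)=\GW_{\beta}(h)\cdot\sigma=(\beta,\beta)f_{\beta}\,h\sigma$, forcing the eigenvalue on $h\cdot h^{\perp}$ to equal $\lambda_1$ without any calculation. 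Second, $\int_{\Z_{\beta}}\sigma^2\otimes\bar{\sigma}^2=0$ (since $\ev^{\ast}\sigma$ is pulled back from the $2$-dimensional base $\Mbar_{0,0}$, so $\ev^{\ast}\sigma^2$ vanishes), which kills the diagonal coefficient and hence the action on $\Sym^2_0 h^{\perp}$. After these two observations only a single unknown $\psi_{\beta}$ survives, and the paper extracts it from one intersection number, namely $\int_{\Z_{\beta}}\mathsf{L}\otimes\mathsf{L}$ for the Lagrangian fiber class $\mathsf{L}$ on the elliptic model, rather than invoking the full correspondence formula from \cite{Ob2}.
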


One can show that the eigenvalues $\lambda_1, \lambda_2$ are integral, and if $(\beta,\beta)>0$ then $\lambda_2 > \lambda_1 > 0$.
The first few eigenvalues are listed in Table~\ref{eigenvalue_table}.

\begin{table}[ht]
{\renewcommand{\arraystretch}{1.5}\begin{tabular}{| c | c | c | c | c | c | c | c | c | c | c |}
\hline
\!$(\beta,\beta)$\! & $-\frac{5}{2}$ & $-2$ & $- \frac{1}{2}$ & $0$ & $\frac{3}{2}$ & $2$ & $\frac{7}{2}$ & $4$ & $\frac{11}{2}$ & $6$ \\
\hline
$\lambda_1$ & $0$ & $-2$ & $-2$ & $0$ & $180$ & $1008$ & $6930$ & $24640$ & $129668$ & $364320$ \\
\hline
$\lambda_2$ & $3$ & $0$ & $0$ & $0$ & $945$ & $3840$ & $53760$ & $ 138240$ & $1237005$ & $2661120$ \\
\hline
\end{tabular}}
\caption{The first eigenvalues of $\GW_{\beta}$ for $K3^{[2]}$.}
\label{eigenvalue_table}
\vspace{-15pt}
\end{table}

\subsection{Proof of Theorem~\ref{thm_Uniruled}}
A very general pair $(X, \beta)$ has Picard rank~$1$.\footnote{In this statement, we allow $X$ to be a holomorphic symplectic manifold.} 
Hence there exists $N_{\beta} \in \BQ$ such that
\[ \ev_{\ast} [ \Mbar_{0,1}(X,\beta) ]^{\text{vir}} = N_{\beta} h \in H^2(X, \mathbb{Q}). \]
By specialization, this also holds for any pair $(X, \beta)$ as in Theorem \ref{thm_Uniruled}.

We will evaluate $N_{\beta}$ on the Hilbert scheme of $n$ points on an elliptic $K3$ surface $\mathsf{S}$ with a section.
By Section~\ref{Subsection_curve_classes}, we may assume
\begin{equation} \beta = \mathsf{B} + (d+1)\mathsf{F} + r\mathsf{A} \in H_2(\Hilb^n(\mathsf{S}), \mathbb{Z}), \quad d \geq -1, \ r \in \BZ, \label{betaclass} \end{equation}
where $\mathsf{B}, \mathsf{F} \in H_2(\mathsf{S},\BZ)$ are the classes of the section and fiber of the elliptic fibration,
and $\mathsf{A} \in H_2(\Hilb^n(\mathsf{S}), \mathbb{Z})$ is the class of an exceptional curve (for~$n \geq 2$). Here we apply the natural identification 
\[H_2(\Hilb^n(\mathsf{S}),\BZ) \simeq H_2(\mathsf{S},\BZ) \oplus \BZ \mathsf{A}.\]

Let $\mathsf{F}_0 \subset \mathsf{S}$ be a nonsingular fiber, and let $x_{1}, \ldots, x_{n-1} \in \mathsf{S} \setminus \mathsf{F}_0$ be distinct points. Consider the curve
\[ \mathsf{C} = \{ x_1 + \cdots + x_{n-1} + x' : x' \in \mathsf{F}_0 \} \subset \Hilb^n(\mathsf{S}). \]
Then $\int_{[\mathsf{C}]} h = 1$ and hence by the first equation in \cite[Theorem 2]{Ob2}, we~find
\[
N_{\beta}
= \int_{[ \Mbar_{0,1}(X,\beta) ]^{\text{vir}}} \ev^{\ast} [\mathsf{C}]
= \left[ \frac{\phi^{n-1}}{\Delta} \right]_{q^d y^r}
= \left( \frac{\phi^{n-1}}{\Delta} \right)_{\beta}. \tag*{\qed}
\]

\subsection{Proof of Theorem~\ref{ThmGWb_1}}
Consider the $2$-pointed class
\begin{align*}
\Z_{\beta} & = \ev_{12\ast} [ \Mbar_{0,2}(X,\beta) ]^{\text{vir}} \in H^8(X \times X, \mathbb{Q}).
\end{align*}
By the divisor equation \cite{FP} and Theorem~\ref{thm_Uniruled}, we have
\begin{equation*}
\int_{\Z_{\beta}} \gamma \otimes \delta
=  \left(\int_\beta \delta \int_\gamma h\right) f_{\beta}
\end{equation*}
for all $\delta \in H^2(X, \BQ)$ and $\gamma \in H^6(X, \BQ)$.\footnote{We have suppressed an application of Poincar\'e duality here.} Hence
\begin{gather*}
\GW_{\beta}(\delta) = \left(\int_\beta \delta\right) f_{\beta} h \in H^2(X, \BQ),\\
\GW_{\beta}(\gamma) = \left(\int_\gamma h \right) f_{\beta} \beta \in H^6(X, \BQ).
\end{gather*}

Now consider the $(4,4)$-K\"unneth factor of $\Z_{\beta}$,
\[ \Z_{\beta}^{4,4} \in H^{4}(X) \otimes H^4(X). \]
By monodromy invariance under the group $\mathrm{SO}( H^2(X,\BC), h)$, we~have
\begin{multline*} \label{gcbcv}
\Z_{\beta}^{4,4} = a_{\beta} h^2 \otimes h^2 + b_{\beta} ( h^2 \otimes c_2(X) + c_2(X) \otimes h^2 ) + c_{\beta} c_2(X) \otimes c_2(X) \\ + d_{\beta} (h \otimes h) c_{BB} + e_{\beta} [ \Delta_{X} ]^{4,4}
\end{multline*}
for some $a_{\beta}, b_{\beta}, c_{\beta}, d_{\beta}, e_{\beta} \in \BQ$; see \cite[Section 4]{HHT}. Here
\[ c_{BB} \in \Sym^2(H^2(X, \BQ)) \subset H^2(X, \BQ) \otimes H^2(X, \BQ) \]
is the inverse of the Beauville--Bogomolov class.

Since $\int_{\Z_{\beta}} \sigma^2 \otimes \bar{\sigma}^2 = 0$, we have $e_{\beta}=0$.
Also, since the Gromov--Witten correspondence is equivariant with respect to multiplication by $\sigma$, we find
\[ \GW_{\beta}(h \sigma) = \GW_{\beta}(h)\sigma = (\beta,\beta) f_{\beta} h \sigma. \]
Hence $d_{\beta} = f_{\beta}$. Together with Proposition~\ref{prop2.4} 
and $\int_X v^2 = 48 (\beta,\beta)^2 \neq 0$, this implies
\begin{equation}  \Z_{\beta}^{4,4} = \psi_{\beta} \frac{v \otimes v}{48 (\beta,\beta)^2} + f_{\beta} (h \otimes h) \left( c_{BB} - \frac{h \otimes h}{(\beta,\beta)} \right) \label{dfsd} \end{equation}
for some $\psi_{\beta} \in \BQ$. It remains to determine $\psi_{\beta}$.

As in the proof of Theorem~\ref{thm_Uniruled},
let $\mathsf{S}$ be an elliptic $K3$ surface with a section,
and let $\beta$ be as in \eqref{betaclass}. Consider the fiber class of the Lagrangian fibration $\Hilb^2(\mathsf{S}) \to \p^2$ induced by the elliptic fibration~$\mathsf{S} \to \BP^1$,
\[ \mathsf{L} \in H^4(\Hilb^2(\mathsf{S}), \BQ). \]
We have
\[ \int_{\Hilb^2(\mathsf{S})} h^2\mathsf{L} = 2, \quad \int_{\Hilb^2(\mathsf{S})} v\mathsf{L} =10, \quad \int_{\Hilb^2(\mathsf{S}) \times \Hilb^2(\mathsf{S})} ( h\mathsf{L} \otimes h\mathsf{L} ) c_{BB} = 0. \]
Then \cite[Theorem 1]{Ob2} and \eqref{dfsd} imply the relation
\[
\left( \frac{\Theta^2}{\Delta} \right)_{\beta} = \int_{\Z_{\beta}} \mathsf{L} \otimes \mathsf{L} = \frac{10^2}{48 (\beta,\beta)^2} \psi_{\beta} - \frac{2^2}{(\beta,\beta)} f_{\beta}.
\]
Hence
\begin{equation*} 
\psi_{\beta} 
= \frac{12(\beta,\beta)}{25} \left( 4 f + \CH_1\left( \frac{\Theta^2}{\Delta} \right) \right)_{\beta}
\end{equation*}
where
\begin{equation*} \CH_m = 2 q \frac{d}{dq} - \frac{1}{2m} \left(y \frac{d}{dy}\right)^2, \quad m \geq 1 \end{equation*}
is the \emph{heat operator}.
Explicit formulas for the derivatives of Jacobi forms can be found in \cite[Appendix B]{Ob2}, and this yields $\psi_{\beta} = (\beta,\beta) g_{\beta}$ as desired. 
\qed

\section{Rational curves in the Fano varieties of lines} \label{sec:1}
We give the proof of Theorem \ref{mainthm}. From now on, let $F$ be the Fano variety of lines in a very general cubic $4$-fold $Y$, and let $\beta \in H_2(F, \mathbb{Z})$ be the primitive curve class.

\subsection{Degeneracy locus}
The variety $F$ is naturally embedded in the Grassmannian $\mathrm{Gr}(2,6)$. Let $\CU$ and $\CQ$ be the tautological bundles of ranks $2$ and~$4$ with the short exact sequence
\[
0 \to \CU \to \BC^6 \otimes \CO_{\mathrm{Gr}(2,6)} \to \CQ \to 0.
\]
We use $\CU_F, \CQ_F$ to denote the restriction of $\CU, \CQ$ on $F$. Let $H = c_1(\CU_F^*)$ be the hyperplane class on $F$ with respect to the Pl\"ucker embedding. By \cite{BD}, the primitive curve class $\beta \in H_2(F, \mathbb{Z})$ is characterized by $\int_\beta H =3$. 

The indeterminacy locus $S$ of the rational map \eqref{rationalmap} consists of lines $l \subset Y$ with normal bundle
\[
\CN_{l/Y} = \CO_l(-1) \oplus \CO_l(1)^{\oplus 2}.
\]
For every line $l \subset Y$ corresponding to $s \in S$, there is a pencil of planes tangent to $Y$ along $l$. The residual lines of this pencil form the rational curve $\phi(p^{-1}(s)) \subset F$. By \cite[Proposition 6]{A}, we have
\[\int_{[\phi(p^{-1}(s))]} H = 3.\]
Hence the curve $\phi(p^{-1}(s))$ lies in the primitive curve class $\beta$. Moreover, by the calculations in \cite[Theorem 8]{A}, we find
\begin{equation} \label{60H}
\phi_*[D] = 60 H \in H^2(F, \mathbb{Q}).
\end{equation}

In \cite{A}, the surface $S$ is shown to be nonsingular, and is expressed as the (rank $\leq 2$) degeneracy locus of the (sheafified) Gauss map
\[
g: \mathrm{Sym}^2(\CU_F) \rightarrow \CQ_F^\ast
\]
associated to the cubic $Y$. Let $\pi: \BP\mathrm{Sym}^2(\CU_F) \rightarrow F$ be the $\BP^2$-bundle and let~$h$ be the relative hyperplane class. Then $S$ is isomorphic to the zero locus~$S'$ of a section of the rank $4$ vector bundle $\pi^\ast \CQ_F^\ast \otimes \CO(h)$ on~$\BP\mathrm{Sym}^2(\CU_F)$. Let $H_{S'}, h_{S'}$ be the restrictions of the divisor classes $\pi^\ast H, h$ on $S'$. 

\begin{lem}\label{calculation}
We have 
\[
\int_{S'} H_{S'}^2 = \int_{S'} H_{S'} h_{S'} = \int_{S'} h_{S'}^2 = 315.
\]
\end{lem}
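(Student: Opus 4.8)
The plan is to realize $S'$ as the zero scheme of a regular section of a rank-$4$ bundle, reduce the three integrals to intersection numbers on $F$ via a projective-bundle pushforward, and finally compute those on the Grassmannian $\mathrm{Gr}(2,6)$.

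First I would set $P = \BP\,\mathrm{Sym}^2(\CU_F)$, a smooth $6$-fold, and $E = \pi^*\CQ_F^* \otimes \CO(h)$, a bundle of rank $4$. By the description of $S'$ recalled above, $S'$ is the zero scheme of a section of $E$; since it has the expected dimension $6-4 = 2$ and is smooth by Amerik \cite{A}, the section is regular and $[S'] = c_4(E) \in H^8(P)$. Each of the three intersection numbers then becomes an integral over $P$:
\[ \int_{S'} H_{S'}^{\,2-k}\, h_{S'}^{\,k} = \int_{P} (\pi^* H)^{2-k}\, h^{k}\, c_4(E), \qquad k = 0,1,2. \]
Next I would compute $c_4(E)$ by the formula for the top Chern class of a twist by a line bundle, namely $c_4(E) = \sum_{i=0}^{4} \pi^* c_i(\CQ_F^*)\, h^{4-i}$, and push this forward along $\pi$. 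Using the Grothendieck relation $h^3 + \pi^* c_1 h^2 + \pi^* c_2 h + \pi^* c_3 = 0$ with $c_j = c_j(\mathrm{Sym}^2 \CU_F)$, one has $\pi_* h^{2+j} = s_j(\mathrm{Sym}^2 \CU_F)$ and $\pi_* h^i = 0$ for $i < 2$. Since the Chern classes $c_i(\CQ_F^*)$ are determined by $c(\CQ_F) = c(\CU_F)^{-1}$ and the Segre classes $s_j$ by $s(\mathrm{Sym}^2\CU_F) = c(\mathrm{Sym}^2\CU_F)^{-1}$, this expresses each $\pi_*(h^k c_4(E))$ as a degree-$(2+k)$ polynomial in $c_1(\CU_F) = -H$ and $c_2(\CU_F)$, and hence each integral above as a linear combination of $\int_F H^4$, $\int_F H^2 c_2(\CU_F)$, and $\int_F c_2(\CU_F)^2$.

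Finally I would evaluate these three numbers on $F$. Since $F \subset \mathrm{Gr}(2,6)$ is the zero locus of the section of $\mathrm{Sym}^3(\CU^*)$ cut out by the cubic $Y$, we have $[F] = c_4(\mathrm{Sym}^3 \CU^*) = 18\,\sigma_1^2\sigma_{1,1} + 9\,\sigma_{1,1}^2$ in $H^8(\mathrm{Gr}(2,6))$, where $\sigma_1 = c_1(\CU^*)$ and $\sigma_{1,1} = c_2(\CU^*)$. Using $H = \sigma_1|_F$ and $c_2(\CU_F) = \sigma_{1,1}|_F$, each of the three numbers is a sum of degree-$8$ Schubert pairings such as $\int \sigma_1^6\sigma_{1,1}$ and $\int \sigma_1^4\sigma_{1,1}^2$, which I would compute by Pieri's rule together with Schubert duality. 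This yields $\int_F H^4 = 108$, $\int_F H^2 c_2(\CU_F) = 45$, and $\int_F c_2(\CU_F)^2 = 27$; substituting these back gives $315$ in all three cases.

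The computation is otherwise mechanical, so the main obstacle is the bookkeeping of conventions. I must fix the correct projectivization convention—the ``subbundle'' one in which $\CO(-1) \hookrightarrow \pi^*\mathrm{Sym}^2(\CU_F)$, so that the composite $\CO(-1) \to \pi^*\mathrm{Sym}^2(\CU_F) \to \pi^*\CQ_F^*$ is precisely the section of $E$ that cuts out $S'$—and then use the matching signs in the Grothendieck relation and the Segre pushforward, while consistently distinguishing $\CU_F$ from $\CU_F^*$. The fact that the three a priori different integrals all evaluate to the same value $315$ is a useful internal check confirming that these sign conventions have been applied correctly.
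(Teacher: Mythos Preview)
Your proposal is correct and follows essentially the same route as the paper: identify $[S'] = c_4(\pi^{\ast}\CQ_F^{\ast} \otimes \CO(h))$, push the resulting expressions down to $F$ via the projective bundle structure, and evaluate using the intersection numbers $\int_F H^4$, $\int_F H^2 c_2(\CU_F)$, $\int_F c_2(\CU_F)^2$. The only cosmetic difference is that the paper quotes these three numbers from Amerik~\cite{A} rather than rederiving them by Schubert calculus on $\mathrm{Gr}(2,6)$.
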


\begin{proof}
Let $c = c_2(\CU_F^*) \in H^4(F, \BQ)$. Since $S' \subset \BP\mathrm{Sym}^2(\CU_F)$ is the zero locus of a section of the vector bundle $\pi^\ast \CQ_F^\ast \otimes \CO(h)$, a direct calculation yields 
\begin{multline*}
[S'] = c_4(\CQ_F^\ast \otimes \CO(h)) = 5(\pi^\ast H^2- \pi^\ast c) h^2 - \frac{35}{6}\pi^\ast H^3\cdot h + \frac{10}{3}\pi^\ast H^4 \\
\in H^8(\BP\mathrm{Sym}^2(\CU_F), \BQ).
\end{multline*}
The lemma follows from the projection formula, the intersection numbers calculated in \cite[Lemma 4]{A}, and the projective bundle formula associated to~$\pi: \BP\mathrm{Sym}^2(\CU_F) \rightarrow F$,
\[
h^3= 3\pi^\ast H \cdot h^2 - (2 \pi^\ast H^2 + 4 \pi^\ast c) h + \frac{5}{3}\pi^\ast H^3 \in H^6(\BP\mathrm{Sym}^2(\CU_F), \BQ). \qedhere
\]
\end{proof}

From the lemma we can deduce the following.

\begin{lemma} \label{H=h}
We have $H_{S'} = h_{S'} \in H^2(S', \BQ)$. \end{lemma}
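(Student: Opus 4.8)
The plan is to read off the equality directly from the three coincident intersection numbers supplied by Lemma~\ref{calculation}, using the Hodge index theorem on the surface $S'$. The point is that the values $\int_{S'} H_{S'}^2 = \int_{S'} H_{S'} h_{S'} = \int_{S'} h_{S'}^2 = 315$ are arranged precisely so that the difference of the two divisor classes is simultaneously isotropic and orthogonal to a class of strictly positive square; on a surface this forces the difference to vanish.

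Concretely, I would first introduce the difference class $D = H_{S'} - h_{S'} \in \mathrm{NS}(S')_{\BQ}$, which is algebraic since both $H_{S'}$ and $h_{S'}$ are restrictions to $S'$ of the divisor classes $\pi^{\ast} H$ and $h$. Using Lemma~\ref{calculation} I would then record the two numerical identities
\[
D \cdot H_{S'} = \int_{S'} H_{S'}^2 - \int_{S'} h_{S'} H_{S'} = 315 - 315 = 0,
\]
\[
D^2 = \int_{S'} H_{S'}^2 - 2 \int_{S'} H_{S'} h_{S'} + \int_{S'} h_{S'}^2 = 315 - 2 \cdot 315 + 315 = 0.
\]

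Next I would invoke the Hodge index theorem: the intersection form on $\mathrm{NS}(S')_{\BR}$ has signature $(1, \rho - 1)$. Since $\int_{S'} H_{S'}^2 = 315 > 0$, the class $H_{S'}$ spans the positive part, so its orthogonal complement $H_{S'}^{\perp}$ is negative definite. Because $D \cdot H_{S'} = 0$ we have $D \in H_{S'}^{\perp}$, and in a negative definite space the only isotropic vector is $0$; hence $D^2 = 0$ forces $D = 0$. This yields $H_{S'} = h_{S'}$ in $\mathrm{NS}(S')_{\BQ}$, and therefore in $H^2(S', \BQ)$, as claimed.

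The step demanding the most care is conceptual rather than computational: one must confirm that $D$ genuinely lies in $\mathrm{NS}(S')_{\BQ}$, so that the Hodge index theorem applies, and that $H_{S'}$ has strictly positive self-intersection. Both are immediate here, since $D$ is a difference of restrictions of algebraic divisor classes and $H_{S'}$ is the restriction to $S'$ of the ample Pl\"ucker class, with $\int_{S'} H_{S'}^2 = 315 > 0$. Once these observations are in place the conclusion is essentially forced, and no substantive obstacle remains.
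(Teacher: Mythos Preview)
Your argument has a genuine gap: you apply the Hodge index theorem in the form ``the intersection form on $\mathrm{NS}(S')_{\BR}$ has signature $(1,\rho-1)$'', but this formulation requires $S'$ to be \emph{connected}. At this point in the paper connectedness of $S$ (hence of $S'$) has not been established; indeed the footnote after Theorem~\ref{mainthm} records that an earlier claimed proof of connectedness was faulty, and that the paper's argument is designed to avoid it. If $S'$ has $k$ connected components $S_1,\ldots,S_k$, the intersection form on $\mathrm{NS}(S')_{\BR}$ has signature $(k,\rho-k)$, and from $D\cdot H_{S'}=\sum_i D_i\cdot H_{S_i}=0$ and $D^2=\sum_i D_i^2=0$ you cannot conclude that each $D_i$ vanishes: cancellation between components is possible.

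The paper's proof closes exactly this gap. It shows, via an incidence-variety argument, that the monodromy of the family of surfaces $S'$ over the moduli of cubic $4$-folds acts transitively on the connected components. Since $H_{S'}$ and $h_{S'}$ are restricted from globally defined classes they are monodromy invariant, so the component-wise pairings $\int_{S_i} H_{S_i}\cdot a_i$ (with $a_i$ the restriction of $D$ to $S_i$) are all equal; the vanishing of their sum then forces each to vanish, and one can apply Hodge index component by component. Your argument becomes correct if you either insert this monodromy step, or invoke Huybrechts's result that $S$ is connected before applying the Hodge index theorem.
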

\begin{proof}
Let $U \subset \p(H^0(\p^5, \CO(3)))$ be the open locus parametrizing nonsingular cubic $4$-folds and consider the incidence correspondence
\[ I = \{ (Y,\ell) : \ell \subset Y \text{ corresponding to } s \in S \} \subset U \times \mathrm{Gr}(2,6). \]
By \cite[Proof of Lemma 1]{A} and general properties of determinantal varieties,
the fibers of the projection $I \to \mathrm{Gr}(2,6)$ are irreducible.
Using the homogeneity of $\mathrm{Gr}(2,6)$ we find that $I$ is irreducible.
In particular, the monodromy of the projection $I \to U$ acts transitively on the set of connected components of a very general fiber.
Since the restriction $\pi|_{S'} : S' \to S$ is an isomorphism, the same applies to the monodromy of $S'$.

Let $S_i$ be the connected components of $S'$ and write
\[ H_{S'} - h_{S'} = \sum_i a_i \]
with $a_i \in H^2(S_i, \BQ)$. The classes $H_{S'}$ and $h_{S'}$ are monodromy invariant since they are restricted from $\mathrm{Gr}(2,6)$ and~$\p \Sym^2( \CU_F )$. Hence for every monodromy operator $g: H^{\ast}(S',\BQ) \to H^{\ast}(S',\BQ)$
that sends the $i$-th to the $j$-th component, we find $g a_i = a_j$ and
\[ \int_{S'} H_{S'} \cdot a_i = \int_{S'} g H_{S'} \cdot g a_i = \int_{S'} H_{S'} \cdot a_j. \]
In particular, the intersection number $\int_{S'} H_{S'} \cdot a_i$ is independent of $i$.

By Lemma~\ref{calculation} we have $\int_{S'} H_{S'}(H_{S'} - h_{S'}) = 0$, which together with the above implies that
$\int_{S'} H_{S'} \cdot a_i  = 0$ for all $i$. Now $\int_{S'}(H_{S'} - h_{S'})^2 = 0$ and the Hodge index theorem force $a_i = 0$.
\end{proof}

\begin{cor}\label{cor1.3}
If $H_S$ is the restriction of $H$ to $S$, then we have
\[
c_1(S) = -3H_S \in H^2(S, \BQ).
\]
\end{cor}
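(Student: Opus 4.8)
The plan is to compute $c_1(S')$ directly from the description of $S'$ as the zero locus of a regular section of the rank~$4$ bundle $\CE = \pi^\ast \CQ_F^\ast \otimes \CO(h)$ on the $\BP^2$-bundle $P = \BP\mathrm{Sym}^2(\CU_F)$, and then transport the answer to $S$ along the isomorphism $\pi|_{S'} : S' \xrightarrow{\sim} S$. Since the defining section is regular, the normal bundle is $\CN_{S'/P} = \CE|_{S'}$, so the tangent--normal sequence $0 \to T_{S'} \to T_P|_{S'} \to \CE|_{S'} \to 0$ gives
\[ c_1(S') = \bigl( c_1(T_P) - c_1(\CE) \bigr)\big|_{S'}. \]
Everything thus reduces to expressing the two first Chern classes on $P$ in terms of the classes $\pi^\ast H$ and $h$.

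To compute $c_1(T_P)$ I would use the relative tangent sequence $0 \to T_{P/F} \to T_P \to \pi^\ast T_F \to 0$. Since $F$ is holomorphic symplectic, $c_1(F) = 0$, so $c_1(T_P) = c_1(T_{P/F})$. The relative Euler sequence for $\pi$ yields $c_1(T_{P/F}) = \pi^\ast c_1(\mathrm{Sym}^2\CU_F) + 3h$, and since $c_1(\mathrm{Sym}^2\CU_F) = 3\,c_1(\CU_F) = -3H$ this gives $c_1(T_P) = -3\,\pi^\ast H + 3h$. For $c_1(\CE)$ I would read off $c_1(\CQ_F) = -c_1(\CU_F) = H$ from the tautological sequence $0 \to \CU_F \to \BC^6 \otimes \CO_F \to \CQ_F \to 0$ on the Grassmannian, so that $c_1(\CQ_F^\ast) = -H$ and hence $c_1(\CE) = -\pi^\ast H + 4h$.

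Subtracting gives $c_1(S') = (-2\,\pi^\ast H - h)\big|_{S'} = -2H_{S'} - h_{S'}$. Now Lemma~\ref{H=h} replaces $h_{S'}$ by $H_{S'}$, yielding $c_1(S') = -3H_{S'}$, and transporting along the isomorphism $\pi|_{S'}$ turns this into $c_1(S) = -3H_S$, as claimed. I do not expect a serious obstacle: the computation is routine once Lemma~\ref{H=h} and the vanishing $c_1(F) = 0$ are in hand. The only point requiring genuine care is fixing the projectivization convention consistently---namely, working with the convention in which $\CO(-h) \hookrightarrow \pi^\ast \mathrm{Sym}^2(\CU_F)$ is the tautological subbundle, so that the Gauss map induces the defining section of $\CE$ and the relative Euler sequence carries the correct rank and sign.
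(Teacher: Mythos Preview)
Your proposal is correct and follows essentially the same route as the paper: both use adjunction for the zero locus $S' \subset \BP\mathrm{Sym}^2(\CU_F)$ together with Lemma~\ref{H=h} to reduce $c_1(S')$ to a multiple of $H_{S'}$. The only difference is that you compute the coefficient $-3$ directly via the Chern class arithmetic $c_1(T_P) - c_1(\CE) = -2\,\pi^\ast H - h$, whereas the paper stops at proportionality and pins down the coefficient by citing Amerik's intersection number calculation; your version is self-contained and arguably cleaner.
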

\begin{proof}
The surface $S'$ is the zero locus of a regular section of the vector bundle $\pi^{\ast} \CQ_{F}^{\ast} \otimes \CO(h)$.
Hence by the adjunction formula and Lemma~\ref{H=h}, the first Chern class $c_1(S)$ is proportional to $H_S$.
The coefficient is determined by a calculation of intersection numbers; see \cite[Remark in Section 2]{A}.
\end{proof}

\subsection{Divisorial contribution}
By Proposition \ref{prop2.1}, the moduli space of stable maps $\Mbar_{0, 1}(F, \beta)$ is pure of dimension $3$. Recall the decomposition~\eqref{decomp1},
\[\Mbar_{0, 1}(F, \beta) = M^0 \cup M^1,\]
such that a general fiber of $\mathrm{ev}: M^i \to \mathrm{ev}(M^i) \subset F$ is of dimension $i$. We first analyze the component $M^0$.

By construction, the family of maps $p : D \to S$ in \eqref{uniruled} has a factorization
\[\phi: D \to M^0 \xrightarrow{\mathrm{ev}} F.\]
We have seen in \eqref{60H} that
\[\phi_*[D] = 60 H \in H^2(F, \mathbb{Q}).\]
On the other hand, by Theorem~\ref{thm_Uniruled}\footnote{By \cite{BD}, we have $(\beta,\beta) = \frac{3}{2}$ and $(\beta, -) = \frac{1}{2} H \in H^2(F, \BQ)$.}
together with property (b) of the virtual fundamental class, we find
\[\mathrm{ev}_*[M^0] = \mathrm{ev}_*[\Mbar_{0, 1}(F, \beta)] = \mathrm{ev}_*[\Mbar_{0, 1}(F, \beta)]^{\mathrm{vir}} = 60 H \in H^2(F, \mathbb{Q}).\]
To conclude $M^0 = D$, it suffices to prove the following proposition.

\begin{prop} \label{compo}
For a very general $F$, each $s \in S$ yields a distinct rational curve $\phi(p^{-1}(s)) \subset F$.
\end{prop}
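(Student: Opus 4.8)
The plan is to show that the map $S \to \Mbar_{0,0}(F,\beta)$ sending $s$ to the stable map $\phi(p^{-1}(s))$ is injective, which amounts to proving that two distinct lines $l, l' \subset Y$ in the indeterminacy locus $S$ cannot give rise to the same residual rational curve in $F$. First I would reformulate the problem geometrically: a point $s \in S$ corresponds to a line $l \subset Y$ with the special normal bundle $\CN_{l/Y} = \CO_l(-1) \oplus \CO_l(1)^{\oplus 2}$, and the associated curve $\phi(p^{-1}(s))$ is the one-parameter family of residual lines obtained from the pencil of planes tangent to $Y$ along $l$. So I must recover the line $l$ (equivalently the point $s$) from the abstract rational curve $C = \phi(p^{-1}(s)) \subset F$.

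The key observation is that one can reconstruct $l$ from $C$ via an incidence/intersection condition inside $Y$. Each point of $C$ corresponds to a residual line $l_t \subset Y$, and all these lines meet $l$ (since $l_t$ is residual to $l$ in a plane section of $Y$, the two lines are coplanar and hence intersect). Thus the line $l$ is distinguished as a line in $Y$ meeting every member of the family $\{l_t\}$. I would argue that for a very general $Y$ this intersection condition pins down $l$ uniquely: if both $l$ and $l'$ were lines meeting all the $l_t$, then one derives strong constraints on the configuration of lines on $Y$ that fail generically. Concretely, I expect to use the genericity of $Y$ together with the numerical input already assembled — the class identity $\phi_*[D] = 60H$ and $\mathrm{ev}_*[M^0] = 60H$ from Theorem~\ref{thm_Uniruled} — to rule out a generic multiplicity. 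Since both pushforwards equal $60H$, a positive-dimensional fiber or a multiple cover would force a numerical discrepancy; so a degree/counting argument shows the map $\phi: D \to M^0$ is generically one-to-one, and it then remains to upgrade ``generically'' to ``everywhere'' using the irreducibility of the incidence variety.

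This is where I would invoke the monodromy and irreducibility already established in the proof of Lemma~\ref{H=h}: the incidence correspondence $I = \{(Y,\ell) : \ell \subset Y \text{ corresponds to } s \in S\}$ is irreducible, and its fibers over $\mathrm{Gr}(2,6)$ are irreducible. Because $S' \cong S$ and the family varies algebraically over the parameter space $U$ of cubics, a statement holding for a very general $F$ on the irreducible family — namely that $s \mapsto \phi(p^{-1}(s))$ is injective — needs only be checked to be nonempty and then propagates by the very-general hypothesis. I would thus reduce the proposition to exhibiting a single cubic $4$-fold (or an open locus) where distinct $s$ manifestly yield distinct curves, for instance by the explicit reconstruction of $l$ from $C$ above, and then conclude by openness that the injectivity holds away from a countable union of proper closed subsets.

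The main obstacle I anticipate is the reconstruction step: proving rigorously that the original line $l$ is uniquely determined by the residual family $C = \phi(p^{-1}(s))$. Two distinct lines $l, l'$ could a priori share the property of meeting all residual lines of each other's tangent pencils, and ruling this out requires controlling the geometry of the cubic fourfold's Fano surface of lines rather delicately. I would attack this by analyzing the surface $S$ through Corollary~\ref{cor1.3}, which gives $c_1(S) = -3H_S$, and by using the fact that $C$ lies in the primitive class $\beta$ with $\int_\beta H = 3$ to bound the number of lines $l_t$ through any auxiliary point of $Y$; the genericity of $Y$ should then prevent the coincidence. An alternative, possibly cleaner route — consistent with the authors' remark about a classical-geometry proof — would be to bypass explicit reconstruction entirely and instead compare the already-computed classes $\mathrm{ev}_*[M^0] = 60H$ and $\phi_*[D] = 60H$ together with a transversality or generic-smoothness argument for $\phi$, deducing $M^0 = D$ scheme-theoretically and hence the injectivity of $s \mapsto \phi(p^{-1}(s))$ as a formal consequence.
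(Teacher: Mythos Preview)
Your proposal does not reach a proof, and the two routes you sketch both have genuine gaps.

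The numerical route is circular. The equality $\phi_*[D] = 60H = \mathrm{ev}_*[M^0]$ does \emph{not} by itself rule out a multiple cover: if $D \to M^0$ were $k$-to-one onto a component $M'_0 \subset M^0$, then $\phi_*[D] = k\,\mathrm{ev}_*[M'_0]$, so $\mathrm{ev}_*[M'_0] = \tfrac{60}{k}H$, and the remaining components of $M^0$ would simply contribute $60(1-\tfrac{1}{k})H$. There is no discrepancy. In the paper the logic runs the other way: Proposition~\ref{compo} is proved first, and only \emph{then} is the numerical equality used to conclude $M^0 = D$. You cannot invoke the conclusion to prove the proposition. The reconstruction route (recovering $l$ as the unique line meeting every $l_t$) is the right geometric intuition, but you acknowledge the obstacle and never overcome it; neither the monodromy from Lemma~\ref{H=h} nor the formula $c_1(S) = -3H_S$ says anything about when two lines of second type share a residual pencil. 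Also, injectivity of $s \mapsto \phi(p^{-1}(s))$ is not an open condition on $Y$ in any obvious sense, so ``check one cubic and propagate'' does not work without further argument.

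The paper's proof is entirely different and much more concrete: it is a dimension count in the space of cubics. Suppose $s_1 \neq s_2$ give the same curve. The pencil of tangent planes along $l_i$ spans a $\BP^3 = P_i$, and one argues $P_1 = P_2$ (else $P_1 \cap P_2$ is a plane contained in $Y$) and $l_1 \cap l_2 = \emptyset$. Choosing coordinates so that $l_1, l_2$ are disjoint coordinate lines and the common dual line $\mathcal{D}(l_1) = \mathcal{D}(l_2)$ is fixed, the cubic equation is forced into a form with at most $36$ coefficients; quotienting by the $20$-dimensional stabilizer in $\mathrm{GL}_6$ gives a locus of dimension $\leq 16$ in the $20$-dimensional moduli of cubic $4$-folds. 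Hence a very general $Y$ avoids it. This is the missing idea in your proposal.
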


\begin{proof}
Let $s_1, s_2 \in S$ be two distinct points and suppose
\[\phi(p^{-1}(s_1)) = \phi(p^{-1}(s_2)) \subset F.\]
For $i = 1, 2$, let $l_i \subset Y$ be the line corresponding to $s_i$, and let $P_i \subset \BP^5$ be the $3$-dimensional linear subspace spanned by the tangent planes along $l_i$. Then necessarily $P_1 = P_2$. Otherwise, the intersection $P_1 \cap P_2$ is a plane that contains all lines in $Y$ corresponding to the points on $\phi(p^{-1}(s_i))$. The fact that $Y$ contains a plane violates the very general assumption. We also know $l_1 \cap l_2 = \emptyset$. Otherwise, the plane spanned by $l_1$ and $l_2$ is tangent to $Y$ along both $l_1$ and $l_2$, which is impossible.

Consider the Gauss map\footnote{It is called the \emph{dual mapping} in \cite{GC}.} associated to the cubic $Y$,
\[\mathcal{D}: \BP^5 \to \BP^{5*}.\]
By definition, the image $\mathcal{D}(l_i) \subset \BP^{5*}$ is a line which is dual to $P_i \subset \BP^5$. Following the argument of Clemens and Griffiths \cite[Section 6]{GC}, we may assume that~$l_1, l_2$ are given by the equations
\begin{gather*}
X_2 = X_3 = X_4 = X_5 = 0,\\
X_0 = X_1 = X_4 = X_5 = 0.
\end{gather*}
Then the condition $P_1 = P_2$ forces $\mathcal{D}(l_1) = \mathcal{D}(l_2)$ to be given by the equations
\[X_0^* = X_1^* = X_2^* = X_3^* = 0.\]
As a result, the cubic polynomial of $Y$ takes the form
\begin{multline} \label{cubicpol}
X_4Q_4^1(X_0, X_1) + X_5Q_5^1(X_0, X_1) \\
+ X_4Q_4^2(X_2, X_3) + X_5Q_5^2(X_2, X_3) + R_1 + R_2.
\end{multline}
Here the $Q_i^j$ are quadratic polynomials, $R_1$ consists of terms of degree at least $2$ in $\{X_4, X_5\}$, and $R_2$ consists of terms of degree $1$ in each of $\{X_0, X_1\}, \{X_2, X_3\}, \{X_4, X_5\}$. The total number of possibly nonzero coefficients in \eqref{cubicpol} is
\[4 \cdot 3 + (4 \cdot 3 + 4) + 2 \cdot 2 \cdot 2 = 36.\]
On the other hand, the subgroup of $\mathrm{GL}(\mathbb{C}^6)$ fixing two disjoint lines in $\BP^5$ is of dimension
\[4 + 4 + 3 \cdot 4 = 20,\]
resulting in a locus of dimension $36 - 20 = 16$ in the moduli space of cubic $4$-folds. This again contradicts the very general assumption of $Y$.
\end{proof}

\subsection{Non-contribution}
We use the Gromov--Witten correspondence introduced in \eqref{GWcor} to eliminate the component $M^1$. Recall that by property~(b) of the virtual fundamental class, the class $[\Mbar_{0, 2}(F, \beta)]^{\mathrm{vir}}$ in \eqref{GWcor} equals the ordinary fundamental class. 

We begin by calculating the contribution of $M^0 = D$ to the Gromov--Witten correspondence
\begin{equation} \label{GWCorF}
\mathsf{GW}_\beta : H^4(F, \mathbb{Q}) \to H^4(F, \mathbb{Q}).
\end{equation}
Recall the diagram \eqref{uniruled} and consider morphisms
\begin{gather*}
\Phi_1^D = p_*\phi^* : H^4(F, \mathbb{Q}) \to H^2(S, \mathbb{Q}),\\
\Phi_2^D = \phi_*p^* : H^2(S, \mathbb{Q}) \to H^4(F, \mathbb{Q}).
\end{gather*}
Comparing with \eqref{341234} and \eqref{factorization}, we see that $\Phi_2^D \circ \Phi_1^D = \phi_*p^*p_*\phi^*$ gives the contribution of $D$ to the Gromov--Witten correspondence \eqref{GWCorF}.

Let $c = c_2(\mathcal{U}_F^*) \in H^4(F, \mathbb{Q})$. Using the short exact sequence
\[0 \to T_F \to T_{\mathrm{Gr}(2, 6)}|F \to \mathrm{Sym}^3(\mathcal{U}_F^*) \to 0,\]
we find
\[8c = 5H^2 - c_2(F) = v_F \in H^4(F, \mathbb{Q})\]
where $v_F$ is the class defined in \eqref{Markman_class}.\footnote{The proportionality of $c$ and $v_F$ also follows from the fact that $c$ is represented by a rational (hence Lagrangian) surface.} There is the following explicit calculation.

\begin{prop} \label{eigen}
We have
\[\phi_*p^*p_*\phi^*c = 945c \in H^4(F, \mathbb{Q}).\]
\end{prop}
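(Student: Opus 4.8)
The plan is to compute the composite operator $\phi_\ast p^\ast p_\ast \phi^\ast$ applied to $c = c_2(\CU_F^\ast)$ directly, by tracking the class through each of the four maps using the geometry of the $\BP^1$-bundle $p : D = \BP(\CN_{S/F}) \to S$ and the intersection theory already established on $S'$. First I would transport everything to the surface $S$ (equivalently $S'$, via the isomorphism $\pi|_{S'}$) so that the computation takes place in $H^\ast(S,\BQ)$, where Lemma~\ref{calculation}, Lemma~\ref{H=h}, and Corollary~\ref{cor1.3} give complete control: we have $H_{S'} = h_{S'}$, the self-intersection numbers are all $315$, and $c_1(S) = -3H_S$. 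Since $8c = v_F$ and $v_F$ is a multiple of $H^2$ (indeed proportional to $5H^2 - c_2(F)$), the whole calculation should reduce to intersection numbers involving $H_S$ and $c_1(S)$ on $S$.

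The key steps, in order, would be as follows. Step one: compute $\Phi_1^D(c) = p_\ast \phi^\ast c \in H^2(S,\BQ)$. Because $\phi^\ast c \in H^4(D,\BQ)$ and $p$ has one-dimensional fibers, $p_\ast$ drops cohomological degree by $2$, landing in $H^2(S,\BQ)$; I expect this to come out proportional to $H_S$, using that $\phi^\ast H = $ (relative hyperplane class plus pullback from $S$) on the $\BP^1$-bundle $D$, together with the known projection-formula identities from Amerik~\cite{A}. The proportionality to $H_S$ should follow since $\mathrm{NS}(S)_\BQ$ is one-dimensional (Lemma~\ref{H=h} and Corollary~\ref{cor1.3}), so I only need to pin down one intersection number to fix the scalar. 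Step two: compute $p_\ast \phi^\ast c = \lambda H_S$ explicitly by pairing against $H_S$ and using $\int_{S'} H_{S'}^2 = 315$. Step three: apply $\Phi_2^D = \phi_\ast p^\ast$ to $\lambda H_S$; here $p^\ast(\lambda H_S)$ is pulled back to $D$ and then $\phi_\ast$ pushes a class in $H^4(D,\BQ)$ forward to $H^4(F,\BQ)$. Since $D$ sweeps out the uniruled divisor and $\phi_\ast[D] = 60H$, the output should again be a multiple of $v_F$ (equivalently of $c$), and the final scalar $945$ should drop out from the numerics $315$, $60$, and the structural constants of the $\BP^1$-bundle.

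The main obstacle will be the bookkeeping in Step three: unlike Step one, the pushforward $\phi_\ast$ along $D \to F$ is not a fibration onto its image in an obvious flat way, so I must be careful that $\phi_\ast p^\ast$ genuinely produces a class proportional to $c$ rather than an a priori independent Hodge class. The resolution is conceptual and already supplied by the surrounding theory: by Proposition~\ref{prop2.4} the operator $\GW_\beta$ preserves the line $\BQ v_X = \BQ v_F$, and $\phi_\ast p^\ast p_\ast \phi^\ast$ is exactly the $M^0$-contribution to $\GW_\beta$ on the two-dimensional Hodge-class space spanned by $H^2$ and $c_2(F)$; since $c$ lies on the distinguished eigendirection $v_F$, the output must be a scalar multiple of $c$. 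Thus the entire content reduces to computing that one scalar, which I would extract by evaluating both sides against a convenient test class (for instance $H^2$, using $\int_F c \cdot H^2$ and the relation $\int_X v^2 = 48(\beta,\beta)^2$ recorded earlier) and reading off $945$. Verifying that this numerical value is consistent with Table~\ref{eigenvalue_table}, where $\lambda_2 = 945$ at $(\beta,\beta) = 3/2$, provides an independent check that the divisorial contribution accounts for the full $v$-eigenvalue of $\GW_\beta$.
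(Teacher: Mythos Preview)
Your overall strategy matches the paper's: argue via Proposition~\ref{prop2.4} that $c$ is an eigenvector of $\phi_\ast p^\ast p_\ast \phi^\ast$, and then extract the eigenvalue by an intersection computation on $S$ using Lemma~\ref{calculation} and Corollary~\ref{cor1.3}. The conceptual part of the proposal is fine.

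The gap is in Step one. You claim that $p_\ast\phi^\ast c \in H^2(S,\BQ)$ is proportional to $H_S$ because ``$\mathrm{NS}(S)_\BQ$ is one-dimensional (Lemma~\ref{H=h} and Corollary~\ref{cor1.3})''. Those results do not say that. Lemma~\ref{H=h} only identifies the two particular classes $H_{S'}$ and $h_{S'}$, and Corollary~\ref{cor1.3} records $c_1(S)=-3H_S$; neither bounds the Picard rank of $S$. (The paper is careful to avoid any such claim---even the connectedness of $S$ is outsourced to a separate reference and explicitly declared unnecessary.) Without this, you cannot determine $p_\ast\phi^\ast c$ from a single pairing with $H_S$, and Step two does not go through. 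A direct computation of $p_\ast\phi^\ast c$ would instead require an explicit formula for $\phi^\ast c_2(F)$ or $\phi^\ast c_2(\CU_F^\ast)$ on $D$, which you do not have.

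The paper avoids this with a self-adjointness trick. By the projection formula,
\[
\int_F \phi_\ast p^\ast p_\ast \phi^\ast c \cdot H^2
= \int_S p_\ast\phi^\ast c \cdot p_\ast\phi^\ast H^2
= \int_F \phi_\ast p^\ast p_\ast \phi^\ast H^2 \cdot c,
\]
so one may swap $c$ for $H^2$ under the operator. The point is that $\phi^\ast H$ is known explicitly on $D$ from Amerik's work, and a projective-bundle calculation (with $c_1(\CN_{S/F})=3H_S$ from Corollary~\ref{cor1.3}) gives $p_\ast\phi^\ast H^2 = 15 H_S$ \emph{on the nose}, with no appeal to the structure of $\mathrm{NS}(S)$. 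Then $\int_S(p_\ast\phi^\ast H^2)^2 = 15^2\cdot 315$, and since $\phi_\ast p^\ast p_\ast \phi^\ast H^2$ is itself a multiple of $c$ (again Proposition~\ref{prop2.4}), dividing by the known numbers $\int_F c\cdot H^2 = 45$ and $\int_F c^2 = 27$ yields the eigenvalue $945$. Replace your Steps one and two with this swap and the argument is complete.
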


\begin{proof}
The argument in Proposition \ref{prop2.4} shows that $c$ is an eigenvector of $\phi_*p^*p_*\phi^*$. To determine the eigenvalue, it suffices to compute the intersection number
\begin{equation} \label{intnum}
\int_F\phi_*p^*p_*\phi^*c \cdot H^2.
\end{equation}
By the projection formula, we have
\begin{multline*}
\int_F\phi_*p^*p_*\phi^*c \cdot H^2 = \int_D p^*p_*\phi^*c \cdot \phi^*H^2 \\
= \int_S p_*\phi^*c \cdot p_*\phi^*H^2 = \int_F\phi_*p^*p_*\phi^*H^2 \cdot c.
\end{multline*}
Again by the argument in Proposition \ref{prop2.4}, we know that $\phi_*p^*p_*\phi^*H^2$ is proportional to $c$. Hence we can deduce the intersection number \eqref{intnum} by calculating instead
\[\int_F\phi_*p^*p_*\phi^*H^2 \cdot H^2 = \int_S (p_*\phi^*H^2)^2.\]

Let $\xi$ be the relative hyperplane class of the projective bundle
\[p: D = \mathbb{P}(\mathcal{N}_{S/F}) \to S.\]
By \cite[Proposition 6]{A} and the projective bundle formula, we~find
\[p_*\phi^*H^2 = p_*(7p^*H_S + 3\xi)^2 = 42H_S - 9c_1(\mathcal{N}_{S/F}) \in H^2(S, \mathbb{Q}),\]
where $H_S$ is the restriction of $H$ to $S$. Moreover, Corollary~\ref{cor1.3} yields
\[c_1(\mathcal{N}_{S/F}) = -c_1(S) = 3H_S \in H^2(S, \mathbb{Q}).\]
Hence we obtain
\[p_*\phi^*H^2 = 15H_S \in H^2(S, \mathbb{Q}).\]
Applying Lemma \ref{calculation}, we find the intersection number
\[\int_F\phi_*p^*p_*\phi^*H^2 \cdot H^2 = \int_S (p_*\phi^*H^2)^2 = 15^2 \cdot 315 = 70875.\]

Finally, by the intersection numbers calculated in \cite[Lemma 4]{A}, we have
\[\int_F\phi_*p^*p_*\phi^*c \cdot H^2 = \int_F\phi_*p^*p_*\phi^*H^2 \cdot c = 70875 \cdot \frac{27}{45} = 42525\]
and hence
\[\phi_*p^*p_*\phi^*c = \frac{42525}{45}c = 945c \in H^4(F, \mathbb{Q}).\qedhere\]
\end{proof}

The eigenvalue in Proposition \ref{eigen} coincides with the one in Theorem~\ref{ThmGWb_1}, 
\[\mathsf{GW}_\beta(c) = 945 c \in H^4(F, \mathbb{Q}).\]
Hence the final step is to show that if the component $M^1$ is nonempty, then it has to contribute nontrivially to the Gromov--Witten correspondence \eqref{GWCorF}.

If $M' \subset M^1$ is a nonempty irreducible component, consider the restriction of \eqref{341234}
\[\begin{tikzcd}
M' \ar{r}{\ev} \ar{d}{p} & F \\
T'
\end{tikzcd}\]
where $T' \subset p(M^1) \subset \Mbar_{0, 0}(F, \beta)$ is the base of $M'$. We define morphisms
\begin{gather*}
\Phi_1^{M'} : H^4(F, \mathbb{Q}) \to H_2(T', \mathbb{Q}), \quad \gamma \mapsto p_\ast (\mathrm{ev}^\ast \gamma \cap [M']),\\
\Phi_2^{M'} = \mathrm{ev}_*p^* : H_2(T', \mathbb{Q}) \to H^4(F, \mathbb{Q}).
\end{gather*}
By definition, the composition $\Phi_2^{M'} \circ \Phi_1^{M'}$ gives the contribution of $M'$ to the Gromov--Witten correspondence \eqref{GWCorF}.

\begin{prop}
If $M' \subset M^1$ is a nonempty irreducible component, then we have
\[\Phi_2^{M'} \circ \Phi_1^{M'} (c) = Nc \in H^4(F, \mathbb{Q})\]
for some $N > 0$.
\end{prop}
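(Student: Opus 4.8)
The plan is to prove the statement in two stages: first that $\Phi_2^{M'} \circ \Phi_1^{M'}(c)$ is proportional to $c$, and then to fix the sign by identifying the output as a nonzero effective Lagrangian surface class. For the proportionality I would simply repeat the argument of Proposition~\ref{prop2.4} with $M'$ and $T'$ in place of $\Mbar_{0,1}(X,\beta)$ and $\Mbar_{0,0}(X,\beta)$. Since $c = \tfrac{1}{8} v_F$ is a Hodge class, $\Phi_1^{M'}(c) = p_\ast(\ev^\ast c \cap [M'])$ is an algebraic $1$-cycle class on the surface $T'$, hence represented by an effective curve $\Gamma \subset T'$. The identity $\ev^\ast \sigma = p^\ast \sigma'$ restricts from $\Mbar_{0,1}(F,\beta)$ to $M'$, so every surface of the form $\ev(p^{-1}(C))$ with $C \subset T'$ a curve is Lagrangian. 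Thus $\Phi_2^{M'}\Phi_1^{M'}(c) = [\ev(p^{-1}(\Gamma))]$ is a Lagrangian surface class, and by the classification recalled before \eqref{Markman_class} it is a multiple of $v_F$; that is, $\Phi_2^{M'}\Phi_1^{M'}(c) = Nc$ for some $N \in \BQ$.

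The real content is the strict inequality $N>0$, and here I would exploit the geometry of the component $M^1$. By Proposition~\ref{prop2.1} the evaluation map on $M'$ has $1$-dimensional general fibers, so $r = n = 2$: the image $Z = \ev(M')$ is an irreducible surface, and the rational curves through a general point $z \in Z$ sweep out a surface $V_z$ of dimension $r = 2$. Since $V_z \subseteq Z$ with both $2$-dimensional and $Z$ irreducible, we get $V_z = Z$. Moreover $Z$ is dominated by the family of rational curves, hence rationally connected (a general MRC fiber is all of $Z$ by Proposition~\ref{prop2.1}(iii)), so it carries no nonzero holomorphic $2$-form and is therefore Lagrangian. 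By the classification above, $[Z] = \mu\, v_F = 8\mu\, c$ with $\mu > 0$, and using $\int_F c^2 = 27$ from \cite[Lemma 4]{A} we obtain $\int_Z c = 8\mu \int_F c^2 = 216\mu > 0$.

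It remains to compute $\Phi_1^{M'}(c)$ and to check that $\ev$ does not contract it. Because $\ev$ factors through the surface $Z$, the class $\ev^\ast c \cap [M']$ is supported on fibers of $\ev$ with total multiplicity $\int_Z c$, so $\ev^\ast c \cap [M'] = (\int_Z c)\,[\ev^{-1}(z)]$ for general $z$; pushing forward along $p$ gives $\Phi_1^{M'}(c) = (\int_Z c)\,[B_z]$, where $B_z = p(\ev^{-1}(z))$ is the curve of stable maps whose image passes through $z$. The restriction $p|_{\ev^{-1}(z)}$ is finite, and $B_z$ is a genuine nonzero effective curve (if it were a point, only one curve would pass through $z$, forcing $\dim V_z = 1$, a contradiction). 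Finally $\ev(p^{-1}(B_z)) = V_z = Z$, which is $2$-dimensional, so
\[
\Phi_2^{M'}\Phi_1^{M'}(c) = \left(\textstyle\int_Z c\right) \ev_\ast p^\ast [B_z] = \left(\textstyle\int_Z c\right) \deg(\ev|_{p^{-1}(B_z)})\, [Z],
\]
a strictly positive multiple of the effective Lagrangian class $[Z] = 8\mu\, c$. Hence $N > 0$.

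The main obstacle is precisely this strict positivity: one must rule out that $\ev$ contracts $p^{-1}(\Gamma)$, which would give $N=0$ and break the argument. This is where the dimension count $r = n = 2$ of Proposition~\ref{prop2.1} is indispensable, since it simultaneously guarantees that $Z$ is a genuine Lagrangian surface (so that $[Z]$ is a positive multiple of $v_F$ and $\int_Z c > 0$) and that the image $\ev(p^{-1}(B_z)) = Z$ is genuinely $2$-dimensional. Once this geometric input is secured, positivity is forced by effectivity together with the fact that every Lagrangian surface class is a positive multiple of $v_F \propto c$.
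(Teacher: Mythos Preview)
Your argument is correct and follows the paper's approach: both identify $Z=\ev(M')$ as a Lagrangian surface with $[Z]$ a positive multiple of $c$, compute $\Phi_1^{M'}(c)$ as a positive multiple of the effective class $[B_z]$ (the paper's $[V_x]$) of maps through a general point, and then push forward to obtain a positive multiple of a Lagrangian surface class. Two minor remarks: the paper passes to simultaneous resolutions $\widetilde{M}'\to\widetilde{Z}'$ to make the identity $\tilde{\iota}^{*}c = 27N'[\tilde{x}]$ rigorous on the possibly singular $Z$, which you should also do before asserting $\ev^{*}c\cap[M'] = (\int_Z c)\,[\ev^{-1}(z)]$; conversely, your explicit check that $\ev(p^{-1}(B_z))=Z$ is genuinely $2$-dimensional (via $V_z=Z$ from Proposition~\ref{prop2.1}) supplies the non-vanishing that the paper's final sentence leaves implicit.
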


\begin{proof}
Let $Z' = \mathrm{ev}(M')$ with $\iota: Z' \hookrightarrow F$ the embedding. Consider the following diagram
\[\begin{tikzcd}
\widetilde{M}' \ar{r}{\widetilde{\mathrm{ev}}} \ar{d}{\tau} & \widetilde{Z}' \ar{d}{} \ar{rd}{\tilde{\iota}} \\
M' \ar{r}{\ev} \ar{d}{p} & Z' \ar{r}{\iota} & F, \\
T'
\end{tikzcd}\]
where $\widetilde{M}'$ and $\widetilde{Z}'$ are simultaneous resolutions of $M'$ and $Z'$. 

We calculate $\Phi_1^{M'}(c) \in H_2(T', \mathbb{Q})$. By the projection formula, we~have\footnote{Since $\widetilde{M}'$ is nonsingular, we have suppressed an application of Poincar\'e duality here.}
\begin{align*}
\Phi_1^{M'}(c) & = p_*(\mathrm{ev}^*\iota^*c \cap[M']) \\
& = p_*\tau_*\tau^*\mathrm{ev}^*\iota^*c \\
& = p_*\tau_*\widetilde{\mathrm{ev}}^*\tilde{\iota}^*c \in H_2(T', \mathbb{Q}).
\end{align*}
Since $Z'$ is Lagrangian, we find
\[[Z'] = \tilde{\iota}_*[\widetilde{Z}'] = N'c \in H^4(F, \BQ)\]
for some $N' > 0$. The intersection number $\int_Fc^2 = 27$ calculated in \cite[Lemma 4]{A} then implies
\[\tilde{\iota}^*c = 27N'[\tilde{x}] \in H^4(\widetilde{Z}', \mathbb{Q})\]
for any point $\tilde{x} \in \widetilde{Z}'$. This yields
\[\Phi_1^{M'}(c) = 27N'p_*\tau_*\widetilde{\mathrm{ev}}^*[\tilde{x}] = 27N'[V_x] \in H_2(T', \mathbb{Q}),\]
where $V_x \subset T'$ parametrizes rational curves through a general point $x \in Z'$. In particular, we see that $\Phi_1^{M'}(c) \in H_2(T', \mathbb{Q})$ is an effective curve class.

As a result, the class
\[\Phi_2^{M'} \circ \Phi_1^{M'}(c) = \mathrm{ev}_*p^*  \Phi_1^{M'}(c) \in H^4(F, \mathbb{Q})\]
is an effective sum of classes of Lagrangian surfaces, and hence a positive multiple of $c$.
\end{proof}

We conclude $M^1 = \emptyset$, and the proof of Theorem \ref{mainthm} is complete.

\appendix
\section{Sketch of a classical proof of Theorem \ref{mainthm}} \label{Classical}
We sketch a proof of Theorem \ref{mainthm} via the classical geometry of cubic hypersurfaces. Let $Y \subset \BP^5$ be a very general cubic $4$-fold, and let $F$ be the Fano variety of lines in $Y$.

Consider the correspondence given by the universal family
\begin{equation*}
\begin{tikzcd}
\mathcal{L}\arrow{r}{q_Y} \arrow{d}{q_F} & Y \\
F.
\end{tikzcd} 
\end{equation*}
A rational curve $R \subset F$ corresponds to a surface $Z = q_Y(q_F^{-1}(R)) \subset Y$. If~$R$ lies in the primitive curve class of $F$, then we have 
\[
[Z] = H_Y^2 \in H^4(Y, \BZ)
\]
with $H_Y$ the hyperplane class on $Y$.

\medskip
\noindent {\emph{Step 1.}} Let $j: Y \hookrightarrow \BP^5$ be the embedding. Since the surface $j(Z) \subset \BP^5$ is of degree $3$, we know from \cite[Page 173]{GH} that $j(Z)$ lies in a hyperplane $\BP^4 \subset \BP^5$. Hence $Z$ is contained in the hyperplane section
\[
Y' = Y \cap \BP^4 \subset \BP^4.
\]

\medskip
\noindent {\emph{Step 2.}} By \cite[Page 525, Proposition]{GH}, the surface $Z \subset Y'$ belongs to one of the following classes:
\begin{enumerate}
    \item[(i)] a cubic rational normal scroll;
    \item[(ii)] a cone over a twisted cubic curve;
    \item[(iii)] a cubic surface given by a hyperplane section of $Y'\subset \BP^4$.
\end{enumerate}

Since (i) and (ii) cannot hold for a very general\footnote{Case (i) corresponds to the divisor $\mathcal{C}_{12}$ in the moduli space of cubic $4$-folds; see \cite{Ha}. Case (ii) is a degeneration of (i), and can be argued by a similar dimension count.} cubic $4$-fold, we find that $Z$ is a cubic surface of the form
\[
Z = Y \cap \BP^3.
\]

\medskip
\noindent {\emph{Step 3.}} The singularities of cubic surfaces were classified long ago; see \cite[Chapter 9]{Do} and \cite[Section 2]{LLSV}. Since $Z$ is integral, it satisfies one of the following conditions:
\begin{enumerate}
    \item[(i)] $Z$ has rational double point singularities;
    \item[(ii)] $Z$ has a simple elliptic singularity;
    \item[(iii)] $Z$ is integral but not normal.
\end{enumerate}

By definition, the surface $Z$ is swept out by a $1$-dimensional family of lines parameterized by a rational curve. Hence we may narrow down to case~(iii).

\medskip
\noindent {\emph{Step 4.}} By further classification results (see \cite[Section 2.3]{LLSV}), the surface $Z$ is projectively equivalent to one of the four surfaces with explicit equations:
\begin{gather*}
X_0^2X_1 + X_2^2X_3 =0,\\
X_0X_1X_2 + X_0^2X_3 + X_1^3 = 0,\\
X_1^3 + X_2^3 + X_1X_2X_3 =0,\\
X_1^3 + X_2^2X_3 = 0.
\end{gather*}
In each of the four cases, the singular locus of $Z$ is a line $l \subset Z$, and the $1$-dimensional family of lines covering $Z$ is given by the residual lines of the planes containing $l$. Hence we conclude that all rational curves in the primitive curve class of $F$ are given by the uniruled divisor \eqref{uniruled}. The uniqueness part of Theorem \ref{mainthm} follows from Proposition \ref{compo}.

\end{document}